\newcommand{\HH}{\mathrm{H}}
\newcommand{\KR}{\mathrm{KR}}
\newcommand{\KU}{\mathrm{KU}}
\newcommand{\KO}{\mathrm{KO}}
\newcommand{\Hom}{\mathrm{Hom}}
\newcommand{\Z}{\mathbf{Z}}
\newcommand{\R}{\mathbf{R}}
\newcommand{\C}{\mathbf{C}}
\newcommand{\F}{\mathbf{F}}
\newtheorem{theorem}{Theorem}[section]
\newtheorem{proposition}[theorem]{Proposition}
\newtheorem{corollary}[theorem]{Corollary}
\newtheorem{lemma}[theorem]{Lemma}
\newtheorem{definition}[theorem]{Definition}
\newtheorem{remark}[theorem]{Remark}
\newtheorem{example}[theorem]{Example}
\newcommand{\wmono}{ \ar@{>->}[r]}
\newcommand{\wmonovert}{ \ar@{>->}[d]}
\newcommand{\cof}{ \ar@{^{(}->}[r]}
\begin{document}

\title{Conjugation spaces and equivariant Chern classes}

\author{Wolfgang Pitsch}
\address{Universitat Aut\`onoma de Barcelona \\ Departament de Matem\`atiques\\
E-08193 Bellaterra, Spain}
\email{pitsch@mat.uab.es}

\author{J\'er\^ome Scherer}
\address{EPFL \\ SB MATHGEOM\\ Station 8, MA B3 455 
CH-1015 Lausanne, Switzerland}
\email{jerome.scherer@epfl.ch}

\thanks{The authors are partially supported  by FEDER/MEC grant MTM2010-20692. The second author would
like to thank the MPI in Bonn for its hospitality.}
\subjclass{Primary 57R20, 55N91; Secondary 55N15; 55P92; 55R10}


\keywords{Conjugation space, characteristic class, equivariant Chern class}

\begin{abstract}
Let $\eta$ be a Real bundle, in the sense of Atiyah, over a space $X$.
This is a complex vector bundle together with an involution which is 
compatible with complex conjugation. We use the fact that $BU$ has a canonical  
structure of a conjugation space, as defined by Hausmann, 
Holm, and Puppe, to construct equivariant Chern classes in certain 
equivariant cohomology groups of $X$ with twisted integer coefficients. 
We show that these classes determine the (non-equivariant) Chern classes of 
$\eta$, forgetting the involution on $X$, and the Stiefel-Whitney classes of the 
real bundle of fixed points.
\end{abstract}

\maketitle

\section*{Introduction}
Let $X$ be a topological space and $\eta$ a complex bundle over $X$. A \emph{Real structure} on $\eta$, in the sense of Atiyah \cite{MR0206940}, 
is given by a couple of compatible involutions on $X$ and on the total space of $\eta$ such that the later is complex anti-linear. 
The machinery of K-theory applies to these bundles and yields $\KR(X)$, called Real K-theory. It comes equipped 
with two natural transformations: the forgetful map $\KR (X) \rightarrow \KU (X)$ and the ``fixed points map'' $\KR (X) \rightarrow \KO (X^\tau)$, 
where $X^\tau$ denotes the subspace of fixed points. Since its introduction by Atiyah, Real K-theory has proved to be a useful tool 
that interpolates between complex K-theory and ordinary real K-theory. For instance it allows one to prove in a unified way both 
Bott periodicity phenomena. It is therefore natural to search for a theory of characteristic classes for Real bundles, called
equivariant Chern classes for reasons that will be clear from their construction, and, ideally, these should interpolate between ordinary 
Chern classes and Stiefel-Whitney classes. 

Kahn provided such a construction in \cite{MR877194}. The equivariant Chern classes $\widetilde{c}_n(\eta)$
live in the equivariant cohomology $\HH^{2n}_{C_2}(X;\Z(n))$ with twisted integral coefficients,
where the action of the cyclic group  of order $2$ on $\Z(n)$ is by
multiplication by $(-1)^n$. When the involution on $X$ is trivial, he shows that the mod $2$ reduction of the equivariant
Chern classes are the Stiefel-Whitney classes of $\eta$. His ideas have found applications in algebro-geometric contexts, such
as the work of dos Santos and Lima-Filho, \cite{MR2084395}.

In this article we adopt a slightly different point of view. We take advantage of the existence of universal Real bundles, namely 
the canonical bundles over $BU(n)$ with involution induced by complex conjugation, to carry out our construction. The key
ingredient which makes the analysis of the situation quite elementary is that $BU(n)$ is a so-called \emph{spherical conjugation space},
a notion introduced by Hausmann, Holm, and Puppe in \cite{MR2171799}. It turns out that the conjugation structure, which
lives at the level of mod $2$ cohomology, upgrades to the level of the (correct choice of) twisted integral coefficients.
The bundle of fixed points in the universal Real bundle is the universal bundle for real K-theory with base space $BO(n)$, 
and this explains the relationship between our new characteristic classes and the Stiefel-Whitney classes.

\medskip

\noindent
{\bf Proposition~\ref{propChernSW2}.}
{\em Let $\eta$ be a Real bundle over $X$ and $\eta^\tau$ the associated real bundle of fixed points. Then the image of 
$\widetilde{c}_n(\eta)$ in $\HH^\ast (X^\tau; \mathbf{F}_2)[u]$ is $\displaystyle\sum_{i=0}^n Sq^i(w_n(\eta^\tau))u^{n-i}$.}

\medskip

As a particular case, we get back Kahn's main result in \cite{MR877194} about Real bundles over spaces with trivial involution. 
Notwithstanding Kahn's construction, we believe that the introduction of the ``conjugation space'' structure sheds 
some more light on the properties of the equivariant Chern classes and in particular on the conceptual reasons of the
appearance of the twisting of the coefficients. 

\medskip

\noindent
{\bf Acknowledgements.} We would like to thank Jean-Claude Hausmann for introducing us to the beautiful theory of 
conjugation spaces.

\section{Notation and basic facts about conjugation spaces}
\label{sec:notation}
The cyclic group of order two is  $C_2$ and $\mathbf{F}_2$ is the field of $2$ elements. The graded ring 
$\HH^\ast(BC_2; \mathbf{Z})$ is isomorphic to the ring $\mathbf{Z}[a]/(2a)$ 
where $a$ has degree $2$ and the mod $2$ cohomology $\HH^\ast(BC_2; \F_2)$ is isomorphic to
$\F_2[u]$ a polynomial algebra on a generator $u$ of degree~$1$. If $X$ is a topological space equipped 
with a (left) $C_2$-action (often denoted by $\tau$), the Borel construction, or the homotopy orbits, $X_{hC_2}$ is the space
$EC_2 \times_{C_2} X$. The equivariant cohomology functor is defined on $C_2$-spaces as the integral
cohomology of the Borel construction (and likewise with mod $2$ coefficients): 
$\HH^\ast_{C_2}(X; \Z) \cong \HH^\ast (X_{hC_2}; \Z)$. The inclusion of the fixed points
$X^\tau = X^{C_2}$ in $X$ induces a restriction homomorphism in cohomology
$r : \HH^{\ast}_{C_2}(X; \F_2) \rightarrow \HH^{\ast}_{C_2}(X^\tau; \F_2)$. The latter is
isomorphic to $\HH^\ast(X; \F_2) \otimes \F_2[u]$ since the action of $C_2$ is
trivial on the fixed points.

Let us give now the basic facts about conjugation spaces which we will need to construct equivariant
Chern classes. A \emph{conjugation space} is a $C_2$-space together with an $\HH^*$-frame $(\kappa,\sigma)$, i.e.
\begin{enumerate}
 \item[a)] an additive isomorphism $\kappa : \HH^{2\ast}(X;\F_2) \rightarrow \HH^\ast(X^\tau; \F_2)$
dividing degrees by~$2$,

\item[b)] an additive section $\sigma : \HH^{2\ast}(X;\F_2) \rightarrow \HH^{2\ast}(X_{hC_2}; \F_2)$
of the restriction map $\rho : \HH^{2\ast}(X_{hC_2}; \F_2) \rightarrow \HH^{2\ast}(X; \F_2)$,
\end{enumerate}

which satisfy the conjugation equation:
\[
r \circ \sigma(x) = \kappa(x) u^m + lt_m
\]
for all $x \in H^{2m}(X; \F_2)$ and all $m \in \mathbf{N}$, where $lt_m$ is a polynomial in the variable $u$ of 
degree less than $m$.

A \emph{conjugation cell} is a $C_2$-space which is equivariantly homeomorphic to
the unit disk in $\mathbf{C}^n$ equipped with complex conjugation.
A \emph{spherical conjugation complex} is a CW-complex $X$ constructed from conjugation cells with 
equivariant attaching maps. Hausmann, Holm, and Puppe prove in their foundational article \cite{MR2171799}
that any spherical conjugation complex is a conjugation space.
In fact, the original motivating examples of conjugation spaces are the complex Grassmanians 
$Gr_{n,k}(\mathbf{C})$, which are spherical. In particular the spaces $BU(n)$, for any $n\geq 1$, and $BU$ are
spherical conjugation spaces. The main property of an $\HH^*$-frame that we will keep using in this article is
the following.

\begin{theorem}[Hausmann, Holm, Puppe \cite{MR2171799}]
The morphisms $\kappa$ and $\sigma$ in an $\HH^*$-frame are ring homomorphisms.
\end{theorem}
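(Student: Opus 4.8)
The plan is to deduce multiplicativity of $\kappa$ and $\sigma$ from a uniqueness property of the frame, the leading-term structure encoded in the conjugation equation being exactly what propagates through products. Throughout I use that the restriction maps $\rho\colon\HH^{2\ast}(X_{hC_2};\F_2)\to\HH^{2\ast}(X;\F_2)$ and $r\colon\HH^\ast_{C_2}(X;\F_2)\to\HH^\ast(X^\tau;\F_2)[u]$ are ring homomorphisms, being induced by inclusions of spaces, and that $\sigma$ is by definition an additive section of $\rho$.

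First I would record the additive structural input about conjugation spaces that underlies everything: for a homogeneous $\F_2$-basis $\{x_\alpha\}$ of $\HH^{2\ast}(X;\F_2)$, the classes $\sigma(x_\alpha)$ form a basis of $\HH^\ast_{C_2}(X;\F_2)$ as a free $\F_2[u]$-module (equivariant formality of conjugation spaces), and $\rho$ is reduction modulo $u$ in this basis, i.e. $\rho\bigl(\sum_\alpha p_\alpha(u)\sigma(x_\alpha)\bigr)=\sum_\alpha p_\alpha(0)x_\alpha$. This is additive information about the frame, hence available independently of the multiplicativity we are after.

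The heart of the argument is a uniqueness lemma: if $\Delta\in\HH^{2N}_{C_2}(X;\F_2)$ satisfies $\rho(\Delta)=0$ and $r(\Delta)$ has $u$-degree at most $N$, then $\Delta=0$. To prove it I expand $\Delta=\sum_\alpha p_\alpha(u)\,\sigma(x_\alpha)$ in the basis above. The condition $\rho(\Delta)=0$ forces $p_\alpha(0)=0$, so $u\mid p_\alpha$ for every $\alpha$; writing $x_\alpha\in\HH^{2m_\alpha}(X;\F_2)$, the conjugation equation $r\sigma(x_\alpha)=\kappa(x_\alpha)u^{m_\alpha}+lt_{m_\alpha}$ lets me track the top power of $u$ contributed by each summand of $r(\Delta)$. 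A bookkeeping shows that the bound "$u$-degree of $r(\Delta)\le N$" together with $u\mid p_\alpha$ is incompatible with any nonzero leading coefficient: organising the basis by the weight $m_\alpha$, the top coefficients are governed by a triangular system whose only solution is trivial, and since $\kappa$ is an isomorphism (so $\kappa(x_\alpha)\neq 0$) this forces every $p_\alpha=0$, i.e. $\Delta=0$. This triangular leading-term analysis, resting on freeness of $\HH^\ast_{C_2}(X;\F_2)$ over $\F_2[u]$, is the step I expect to be the main obstacle; everything else is formal.

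Granting the lemma, multiplicativity follows. For $x\in\HH^{2m}(X;\F_2)$ and $y\in\HH^{2n}(X;\F_2)$ set $\Delta=\sigma(xy)-\sigma(x)\sigma(y)$. Since $\rho$ is a ring homomorphism and $\sigma$ a section, $\rho(\sigma(x)\sigma(y))=xy=\rho(\sigma(xy))$, so $\rho(\Delta)=0$. Applying the ring homomorphism $r$ and the conjugation equation,
\[
r(\sigma(x)\sigma(y))=r\sigma(x)\cdot r\sigma(y)=\bigl(\kappa(x)u^m+lt_m\bigr)\bigl(\kappa(y)u^n+lt_n\bigr)=\kappa(x)\kappa(y)\,u^{m+n}+(\text{lower $u$-degree}),
\]
where no power of $u$ above $m+n$ can occur because the coefficients live in $\HH^\ast(X^\tau;\F_2)$ and so vanish in negative degrees. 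Comparing with $r\sigma(xy)=\kappa(xy)u^{m+n}+lt_{m+n}$ shows that $r(\Delta)$ has $u$-degree at most $m+n$, so the lemma gives $\Delta=0$, that is $\sigma(xy)=\sigma(x)\sigma(y)$. Reading off the coefficient of $u^{m+n}$ then yields $\kappa(xy)=\kappa(x)\kappa(y)$. The unit axioms are handled identically: $1\in\HH^0_{C_2}(X;\F_2)$ satisfies $\rho(1)=1$ and $r(1)=1$ of $u$-degree $0$, so by uniqueness it equals $\sigma(1)$, and comparing leading terms gives $\kappa(1)=1$. Hence both $\sigma$ and $\kappa$ are ring homomorphisms.
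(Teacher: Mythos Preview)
The paper does not supply a proof of this statement; it is quoted as a theorem of Hausmann--Holm--Puppe and simply cited from \cite{MR2171799}. So there is nothing in the paper to compare your argument against directly.

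Your argument is correct and is essentially a reconstruction of the original proof in \cite{MR2171799}. The key mechanism is the same in both: the conjugation equation makes $x\mapsto r\sigma(x)$ upper-triangular in the $u$-filtration with invertible ``diagonal'' $\kappa$, so a class $\Delta$ in degree $2N$ with $\rho(\Delta)=0$ and $r(\Delta)$ of $u$-degree at most $N$ must vanish. Applying this to $\Delta=\sigma(xy)-\sigma(x)\sigma(y)$ and then reading off the leading coefficient gives multiplicativity of $\sigma$ and $\kappa$ simultaneously, exactly as you do.

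The only point that deserves more care is your appeal to equivariant formality, namely that $\HH^\ast_{C_2}(X;\F_2)$ is free over $\F_2[u]$ on the classes $\sigma(x_\alpha)$. You genuinely need this, since otherwise there is no reason the product $\sigma(x)\sigma(y)$ lies in the $\F_2[u]$-span of the $\sigma(x_\alpha)$ and your expansion of $\Delta$ is unjustified. It is true and, as you say, purely additive: the existence of the additive section $\sigma$ forces the $C_2$-action on $\HH^\ast(X;\F_2)$ to be trivial and all differentials out of the fibre column of the Serre spectral sequence of $X\to X_{hC_2}\to BC_2$ to vanish; since $\HH^{\mathrm{odd}}(X;\F_2)=0$ (part of the definition of a conjugation space in \cite{MR2171799}, omitted in this paper's summary) and $u$ is a permanent cycle, the derivation property kills all remaining differentials, and $u^p\sigma(x_\alpha)$ then realises the associated graded basis. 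So there is no circularity, but it would strengthen your write-up to make this step explicit rather than to assert it.
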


\section{Why one has to twist the coefficients}
We wish to construct equivariant Chern classes for Real bundles in equivariant cohomology groups with 
integral coefficients and our main requirement is that one recovers the classical Chern classes by forgetting the $C_2$-action. 
Thus, a first na\"ive, but natural, place to look for such classes is in the integral equivariant cohomology 
 $\HH^\ast_{C_2}(X;\Z)$ with \emph{trivial} coefficients. We will illustrate in a fundamental example
 why this does not work. It will show us at the same time how to calibrate the correct answer.

\begin{example}
\label{Hopf}
{\rm
Consider the sphere $S^2$ equipped with the reflection through the equatorial circle. This is a spherical conjugation space obtained
by identifying the boundary of a conjugation disc to a point. The Hopf bundle over $S^2$ is naturally a Real bundle, so its first equivariant
Chern class should correspond to a copy of the integers in $\HH^2_{C_2}(S^2;\Z) = \HH^2\left( (S^2)_{hC_2}; \Z \right)$.

But in this case it is easy to identify the Borel construction, since $S^2$ consists of an equatorial copy of $S^1$ with trivial action and two discs, the
hemispheres, transposed by $\tau$. As a consequence $S^2_{hC_2}$ is equivalent to the half-smash $S^1 \ltimes \R P^\infty$, hence
$\HH^2_{C_2}(S^2;\Z) \cong \Z/2$. As there is no non-trivial homomorphism from $\Z/2$ to $\HH^2(S^2; \Z) \cong \Z$, there is no
way to recover $c_2(\eta)$ from any class in $\HH^2_{C_2}(S^2;\Z)$.

However, the Hopf bundle over $S^4$ can also be seen as a Real bundle over the conjugation sphere of dimension~$4$
and one computes here that $\HH^4_{C_2}(S^4; \Z) \cong \Z \oplus \Z/2$. There is room here for an equivariant Chern class!
}
\end{example}

A closer inspection of the examples shows that the difference can be stated as follows: on cells of dimension 
$2 \text{ mod } 4$ the conjugation reverses the orientation and on cells of dimension $ 0 \text{ mod } 4$ it preserves 
the orientation\footnote{A conjugation cell of dimension $2n$  can be viewed as an open unit disk in $\C^n$ with a 
$C_2$-action induced by complex conjugation, in particular it has a canonical orientation.}. This leads us to look for 
even degree Chern classes in the ordinary equivariant cohomology group $\HH^\ast_{C_2}(X; \Z)$ and for odd degree   
Chern classes in the equivariant cohomology $\HH^\ast_{C_2}(X;\Z(1))$ with twisted coefficients $\Z(1)$, the  
$C_2$-module  $\Z$ endowed with the  change of sign action.

Let us denote by $\Z(n)$ the $C_2$-module $\Z$ where the action is given by multiplication by $(-1)^n$. It is straightforward to see 
that $\Hom_\Z(\Z(i),\Z(j)) \cong \Z(i+j)$ and $\Z(i)\otimes_\Z \Z(j) \cong \Z(i+j)$. Because the module structure depends only on the parity of $n$,
we only keep the modules $\Z(0)$ and $\Z(1)$ and notice the following.

\begin{remark}
{\rm 
Let $X$ be a spherical conjugation complex. The cup product turns the direct sum 
$\HH^{\ast}_{C_2}(X;\Z(0)) \oplus \HH^{\ast}_{C_2}(X;\Z(1))$ into a commutative algebra, which is
natural in $X$ with respect to equivariant maps. We observe that the submodule
\[
\mathcal{H}^\ast_{C_2}(X;{}^t\Z) =  \bigoplus_{n \geq 0}\HH^{4n}_{C_2}(X;\Z(0))\bigoplus \bigoplus_{n \geq 0}\HH^{4n+2}_{C_2}(X;\Z(1))
\]
is a subalgebra.}
\end{remark}

We turn now to a more detailed analysis of the algebra  $\mathcal{H}^\ast_{C_2}(X,{}^t\Z)$ for a spherical 
conjugation complex.  For this we compute the Leray-Serre spectral sequences with twisted coefficients $\Z(0)$ and $\Z(1)$ associated to the canonical fibration 
\[
\xymatrix{X \cof &  X_{hC_2} \ar@{->>}[r] & BC_2}.
\]
We have to compute first the cohomology $\HH^\ast(X, \Z(n))$ as $C_2$-modules, \cite[Section VII.7]{MR1324339}, where the action
of $C_2$ is the diagonal one, induced by the action on $C^{cell}_*(X)$ and $\Z(n)$. We write $\HH^n(X;\Z)$ for the non-equivariant
cohomology of $X$ with trivial action. A spherical conjugation complex has only even dimensional cells by construction so that one finds:
\[
\HH^n(X;\Z(0)) = \left\{
\begin{array}{cl}
 \Z(0) & \text{ if } n = 0, \\
 0 & \text{ if } n \text{ is odd,} \\
\HH^n(X;\Z) \otimes \Z(0) & \text{ if } n = 0 \text{ mod } 4,\\
\HH^n(X;\Z) \otimes \Z(1) & \text{ if } n = 2 \text{ mod } 4.
\end{array}
\right.
\]
\[
\HH^n(X;\Z(1)) = \left\{
\begin{array}{cl}
 \Z(1) & \text{ if } n = 0,\\
 0 & \text{ if } n \text{ is odd,} \\
\HH^n(X;\Z) \otimes \Z(1) & \text{ if } n = 0 \text{ mod } 4,\\
\HH^n(X;\Z) \otimes \Z(0) & \text{ if } n = 2 \text{ mod } 4.
\end{array}
\right.
\]
As for the cohomology of $BC_2$ one has $\HH^\ast(BC_2;\Z(0)) \cong \Z[a]/(2a)$ where $a$ has degree~$2$, and 
$\HH^\ast(BC_2;\Z(1)) \cong \oplus_{i \geq 0}\Z/2 \langle e_{2i+1} \rangle$ where $e_{2i+1}$ has degree $2i+1$. Both 
spectral sequences have a vertical periodicity pattern of order $4$ starting from the first line on.

\begin{table}[htb]
\[
\begin{array}{r|c|c|c|c|c || l}
q \\
0 \text{ mod } 4&\oplus \Z & 0 & \oplus \Z/2 & 0 & \oplus \Z/2 & \HH^4(X;\Z) \otimes \HH^p(BC_2; \Z) \\
\text{odd}  & 0 & 0 & 0 & 0 & 0 & \\
2 \text{ mod } 4& 0 & \oplus \Z/2 & 0 & \oplus \Z/2 & 0 &\\
\text{odd}& 0 & 0 & 0 & 0 & 0 & \\
\hline\hline
0 & \Z & 0 & \Z/2 & 0 & \Z/2 & \HH^0(X;\Z)\otimes \HH^p(BC_2;\Z) \\ \hline
 & 0& 1& 2 & 3 & 4 & p
\end{array}
\]
\caption{ Spectral sequence for coefficients  $\Z(0)$}
\end{table}

\begin{table}[htb]
\[
\begin{array}{r|c|c|c|c|c || l}
q \\
0 \text{ mod } 4 & 0 & \oplus \Z/2 & 0 & \oplus \Z/2 & 0 &\\
\text{ odd } & 0 & 0 & 0 & 0 & 0 & \\
2 \text{ mod } 4&\oplus \Z & 0 & \oplus \Z/2 & 0 & \oplus \Z/2 & \HH^2(X;\Z) \otimes \HH^p(BC_2; \Z) \\
\text{ odd }& 0 & 0 & 0 & 0 & 0 & \\
\hline \hline
0 & 0 & \Z/2 & 0 & \Z/2 & 0 & \\ \hline
 & 0& 1& 2 & 3 & 4 &
\end{array}
\]
\caption{ Spectral sequence for coefficients  $\Z(1)$}
\end{table}

\begin{lemma}
\label{lem:SS}
The two spectral sequences $\HH^p(BC_2;\HH^q(X;\Z(1))) \Rightarrow \HH^{p+q}_{C_2}(X; \Z(1))$ and 
$\HH^p(BC_2;\HH^q(X;\Z(0))) \Rightarrow \HH^{p+q}_{C_2}(X; \Z(0))$ degenerate at the page $E_2$. Moreover,
$\HH^{2n}_{C_2}(X; \Z(n)) \cong \HH^{2n}(X; \Z) \oplus \displaystyle\bigoplus_{p<n} \HH^{2p}(X; \Z) \otimes \Z/2$.
\end{lemma}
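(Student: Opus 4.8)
The plan is to derive everything from the mod~$2$ situation, where the conjugation space hypothesis is exactly what is needed. First I would treat the mod~$2$ Leray--Serre spectral sequence of $X\hookrightarrow X_{hC_2}\to BC_2$. Since the involution on a conjugation space acts trivially on $\HH^\ast(X;\F_2)$, its $E_2$-term is the bigraded algebra $\HH^\ast(X;\F_2)\otimes\F_2[u]$, with $u\in E_2^{1,0}$. The additive section $\sigma$ of the $\HH^\ast$-frame says precisely that the fibre restriction $\rho\colon\HH^\ast(X_{hC_2};\F_2)\to\HH^\ast(X;\F_2)$ is onto (here one uses that $X$, having only even cells, has cohomology concentrated in even degrees, so $\HH^{2\ast}=\HH^\ast$). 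Surjectivity of $\rho$ forces every class of the fibre column $E_2^{0,\ast}$ to be a permanent cycle; as $u$ is also a permanent cycle and the $E_2$-term is generated as an algebra by $E_2^{0,\ast}$ together with $u$, multiplicativity yields that the mod~$2$ spectral sequence degenerates at $E_2$.

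Next I would transfer this to the two twisted integral spectral sequences using the reductions $\Z(0)\to\F_2$ and $\Z(1)\to\F_2$; being maps of coefficient systems, they induce morphisms of spectral sequences to the mod~$2$ one, compatible with the differentials. Inspecting Tables~1 and~2 one sees that the nonzero entries occur only in even rows and, within each such row, only in columns of one fixed parity; chasing the bidegree shift $(p,q)\mapsto(p+r,q-r+1)$ then shows that $d_r$ can be nonzero only when $r\equiv 3\pmod 4$, and that the target of any such differential sits in a column $p\geq 2$. In every such column the $E_2$-entry is $2$-torsion of the form $\HH^{q'}(X;\Z)\otimes\Z/2$, and there the reduction to the mod~$2$ spectral sequence is an isomorphism: this rests on the elementary isomorphisms $\HH^{\mathrm{odd}}(BC_2;\Z(1))\xrightarrow{\sim}\HH^{\mathrm{odd}}(BC_2;\F_2)$ and $\HH^{p}(BC_2;\Z(0))\xrightarrow{\sim}\HH^{p}(BC_2;\F_2)$ for even $p>0$, combined with $\HH^{q'}(X;\Z)\otimes\Z/2\cong\HH^{q'}(X;\F_2)$ (valid because $\HH^\ast(X;\Z)$ is free). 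Since the mod~$2$ differentials vanish and reduction is injective on every possible target, each $d_r$ must vanish; the lacunary pattern keeps the pages equal to $E_2$ between consecutive exponents $r\equiv 3\pmod 4$, so an induction on $r$ finishes the degeneration of both integral spectral sequences.

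Finally, for the additive identification I would read off the surviving antidiagonal $\bigoplus_{p+q=2n}E_2^{p,q}$ for the coefficients $\Z(n)$. The filtration-zero quotient is $E_\infty^{0,2n}=\HH^{2n}(X;\Z)$, a free abelian group, and the remaining contributions are the $2$-torsion groups $\HH^{2p}(X;\Z)\otimes\Z/2$ read from the lower even rows. Two extension issues arise, and both are easy: the top quotient is free, so it splits off as a direct summand; and the whole positive-filtration part is annihilated by $2$, since by the multiplicative structure of the Remark it is generated over the fibre by the $2$-torsion classes pulled back from $BC_2$ (the image of $a$ and of the degree-one class of $\HH^\ast(BC_2;\Z(1))$). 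Hence the torsion is elementary abelian, and assembling the table entries gives the asserted isomorphism $\HH^{2n}_{C_2}(X;\Z(n))\cong\HH^{2n}(X;\Z)\oplus\bigoplus_{p<n}\HH^{2p}(X;\Z)\otimes\Z/2$.

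The step I expect to be the real obstacle is the second one: unlike the degeneration of a first-quadrant spectral sequence with a single possible differential, here infinitely many differentials $d_{4k+3}$ are a priori nonzero. The decisive point is that their targets invariably land in positive even filtration, where the entries are $2$-torsion and the comparison map to the degenerate mod~$2$ spectral sequence is faithful; this is what lets the conjugation space input, which lives mod~$2$, govern the integral computation. The first step is conceptual but purely a matter of unwinding the $\HH^\ast$-frame, and the last step is routine bookkeeping once degeneration is known.
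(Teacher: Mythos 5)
Your argument is correct and follows the paper's proof in its essentials: both establish degeneration by reducing the coefficients mod $2$ and observing that the comparison map to the (degenerate, since $X$ is a conjugation space) mod $2$ spectral sequence is injective on every entry that could receive a differential, all of which lie in columns $p\geq 2$ where the entries are $\HH^{q}(X;\Z)\otimes\Z/2$. The only divergence is the extension step, where you split off the free quotient and kill the positive-filtration part by multiplicativity, whereas the paper rules out $\Z/2^{k}$ summands with $k>1$ by comparing associated graded modules under mod $2$ reduction; both resolutions are sound.
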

\begin{proof}
Reducing the coefficients modulo $2$ gives a natural transformation of spectral sequences from either of the spectral sequences 
to the spectral sequence with trivial coefficients $\F_2$, converging to $\HH^\ast(X_{hC_2}; \F_2)$. As $X$ is a conjugation space 
we know that the latter degenerates at the page $E_2$.

A direct computation, using the description of $\HH^\ast(X;\Z(n))$, shows that on non-zero elements, mod $2$ 
reduction induces an isomorphism for any $p >1$: 
\[
 \HH^p(BC_2;\HH^q(X;\Z(i))) \stackrel{\cong}{\rightarrow}  \HH^p(BC_2;\HH^q(X;\F_2)).
\]
Since on the right hand side we have permanent cycles the left hand side has to be made of permanent cycles too. In
particular all differentials originated in the vertical axis, $p=0$, must be trivial, so that $ \HH^0(BC_2;\HH^q(X;\Z(n)))$
is also made of permanent cycles.

The comparison with the mod $2$ reduction also allows us to compute the module structure of $\HH^{2n}_{C_2}(X, \Z(n))$.
The surjection of the associated graded modules
\[
\HH^{2n}(X; \Z) \oplus \displaystyle\bigoplus_{p<n} \HH^{2p}(X; \Z) \otimes \Z/2 \twoheadrightarrow \HH^{2n}(X; \F_2) \oplus \displaystyle\bigoplus_{p<n} \HH^{2p}(X; \F_2)
\] 
is an isomorphism except on the first factor. The latter graded module is isomorphic to $\HH^{2n}_{C_2}(X; \F_2)$
as there are no non-trivial extensions of $\F_2$-vector spaces, and thus there can be no non-trivial extensions
in the former (a copy of $\Z/2^k$ for some $k > 1$ would imply the presence of $\Z/2^{k-1}$ in the kernel of the mod $2$ 
reduction map, which is impossible). 
\end{proof}

Combining the edge homomorphisms of both spectral sequences we get a restriction homomorphism to ordinary cohomology.
\begin{lemma}
The restriction map $\widetilde{\rho} : \mathcal{H}^\ast_{C_2}(X,{}^t\Z) \rightarrow \HH^\ast (X;\Z)$ is  surjective and 
a ring homomorphism. \hfill{\qed}
\end{lemma}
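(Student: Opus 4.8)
The plan is to identify $\widetilde{\rho}$ with restriction to the fiber, that is, with the edge homomorphism of the Leray--Serre spectral sequence along the axis $p=0$. For the fibration $X \hookrightarrow X_{hC_2} \to BC_2$ both local systems $\Z(0)$ and $\Z(1)$ pull back to the constant system $\Z$ over the fiber $X$, so restricting an equivariant class to the fiber lands in ordinary cohomology $\HH^\ast(X;\Z)$. Concretely, $\widetilde{\rho}$ is the fiber edge homomorphism of the first spectral sequence on each summand $\HH^{4n}_{C_2}(X;\Z(0))$ and of the second on each summand $\HH^{4n+2}_{C_2}(X;\Z(1))$; since $\Z(n)$ depends only on the parity of $n$, the degree $2n$ piece of $\mathcal{H}^\ast_{C_2}(X,{}^t\Z)$ is precisely $\HH^{2n}_{C_2}(X;\Z(n))$. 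It then remains to prove two things: that $\widetilde{\rho}$ is onto, and that it is multiplicative.

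Surjectivity follows almost at once from Lemma~\ref{lem:SS}. Since both spectral sequences degenerate at $E_2$, the fiber edge homomorphism is the projection
\[
\HH^{2n}_{C_2}(X;\Z(n)) \twoheadrightarrow E_\infty^{0,2n} = E_2^{0,2n} = \HH^0\bigl(BC_2;\HH^{2n}(X;\Z(n))\bigr).
\]
Using the module description of $\HH^{2n}(X;\Z(n))$ recorded before the lemma, the monodromy acts on a degree $2n$ fiber class by $(-1)^n \tau^\ast$, and since $\tau^\ast$ is multiplication by $(-1)^n$ on $\HH^{2n}(X;\Z)$ this monodromy is trivial; hence these invariants are all of $\HH^{2n}(X;\Z)$. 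Equivalently, Lemma~\ref{lem:SS} already exhibits $\HH^{2n}(X;\Z)$ as the free summand of $\HH^{2n}_{C_2}(X;\Z(n))$ onto which the edge homomorphism projects. As a spherical conjugation complex has cohomology concentrated in even degrees, this accounts for all of $\HH^\ast(X;\Z)$ and $\widetilde{\rho}$ is surjective.

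For multiplicativity I would use that fiber restriction is always a ring homomorphism. The map $\HH^\ast_{C_2}(X;\Z(0)) \oplus \HH^\ast_{C_2}(X;\Z(1)) \to \HH^\ast(X;\Z)$ given by the fiber edge homomorphism on each twist carries a product $\alpha \cup \beta$, with $\alpha \in \HH^\ast_{C_2}(X;\Z(i))$ and $\beta \in \HH^\ast_{C_2}(X;\Z(j))$ computed through the pairing $\Z(i)\otimes_\Z \Z(j) \cong \Z(i+j)$, to the product of the restrictions, because this coefficient pairing restricts over the fiber to the standard pairing $\Z \otimes_\Z \Z \cong \Z$. Restricting this ring map to the subalgebra $\mathcal{H}^\ast_{C_2}(X,{}^t\Z)$ then shows that $\widetilde{\rho}$ is a ring homomorphism.

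The delicate point is exactly this multiplicativity, since $\widetilde{\rho}$ is assembled from two distinct edge homomorphisms, one per twist: one must follow how a cup product mixes the $\Z(0)$- and $\Z(1)$-components through the isomorphisms $\Z(i)\otimes_\Z \Z(j) \cong \Z(i+j)$ and check that, after restriction to the fiber, all these twists trivialise compatibly with the single untwisted pairing. By contrast, the surjectivity statement is an immediate consequence of the module splitting in Lemma~\ref{lem:SS}.
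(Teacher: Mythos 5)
Your argument is correct and follows exactly the route the paper intends: the paper states this lemma without proof, deriving it implicitly from the degeneration in Lemma~\ref{lem:SS} and the multiplicativity of restriction to the fiber, which is precisely what you spell out. Your identification of $E_2^{0,2n}$ with all of $\HH^{2n}(X;\Z)$ (the monodromy $(-1)^n\tau^*$ being trivial) and your check that the pairings $\Z(i)\otimes_\Z\Z(j)\cong\Z(i+j)$ restrict to the standard one over the fiber supply the details the paper leaves to the reader.
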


For a spherical conjugation complex there is a canonical integral lift for the cohomological section encompassed in the $\HH^*$-frame.

\begin{theorem}
\label{thm intliftsigma}
Let $X$ be a spherical conjugation complex with $\HH^*$-frame $(\sigma,\kappa)$. Then there is a unique section 
$\widetilde{\sigma}$ to the restriction map $\widetilde{\rho} : \mathcal{H}^\ast_{C_2}(X,{}^t\Z) \rightarrow \HH^\ast(X;\Z)$ such that the 
mod $2$ reduction of the section is $\sigma$. Moreover the section  $\widetilde{\sigma}$ is a ring homomorphism which is 
natural with respect to equivariant maps between spherical complexes.
\end{theorem}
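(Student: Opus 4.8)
The plan is to pin down $\widetilde\sigma$ by two defining properties and then extract existence, uniqueness, multiplicativity and naturality from them. Write $\mu$ for each of the mod~$2$ reduction maps induced by $\Z(n)\to\F_2$ and $\Z\to\F_2$, and recall that $\rho$ denotes the mod~$2$ restriction to the fibre, so $\rho\circ\sigma=\mathrm{id}$. In each degree $2n$ the requirement is that
\[
\widetilde\rho\circ\widetilde\sigma=\mathrm{id}\qquad\text{and}\qquad \mu\circ\widetilde\sigma=\sigma\circ\mu ,
\]
the second equation being exactly the assertion that the mod~$2$ reduction of $\widetilde\sigma$ equals $\sigma$. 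I would take these two equations as the characterization of $\widetilde\sigma$.

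Uniqueness is the easy half. If $\widetilde\sigma,\widetilde\sigma'$ both satisfy the two equations, their difference $\delta$ takes values in $\ker\widetilde\rho$ and satisfies $\mu\circ\delta=0$. By Lemma~\ref{lem:SS} the group $\ker\widetilde\rho$ consists of $2$-torsion, whereas the kernel of $\mu$ on $\HH^{2n}_{C_2}(X;\Z(n))$ is $2\cdot\HH^{2n}(X;\Z)$, which lies in the free summand $\HH^{2n}(X;\Z)$ and therefore meets the torsion trivially. Hence $\mu$ is injective on $\ker\widetilde\rho$ and $\delta=0$.

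For existence I would work one degree at a time. Given $x\in\HH^{2n}(X;\Z)$, use surjectivity of $\widetilde\rho$ to choose a lift $y_0$, and set $d:=\sigma(\mu x)-\mu(y_0)$; since both terms restrict to $\mu x$ under $\rho$, we have $d\in\ker\rho$. It suffices to show $d\in\mu(\ker\widetilde\rho)$, for then $d=\mu(z)$ with $z\in\ker\widetilde\rho$ and $\widetilde\sigma(x):=y_0+z$ satisfies both equations. To locate $d$ I compare filtrations: $\mu$ is a morphism from the (degenerate) $\Z(n)$-spectral sequence to the (degenerate) mod~$2$ one, and by the computation underlying Lemma~\ref{lem:SS} the integral $E_\infty$-page vanishes in the columns $p\equiv 2\bmod 4$ while the mod~$2$ page does not. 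Consequently any class in the image of $\mu$, in particular $\mu(y_0)$, has vanishing associated-graded components in those columns. Moreover $\sigma(\mu x)$ is concentrated in filtration~$0$: by Hausmann--Holm--Puppe \cite{MR2171799} the ring $\HH^\ast_{C_2}(X;\F_2)$ is free over $\F_2[u]$ on $\sigma$ of a basis of $\HH^{2\ast}(X;\F_2)$, and this basis splits the filtration with $\sigma(\mu x)$ lying in the top summand. Therefore $d$ has no filtration-$0$ part and no graded pieces in the columns $p\equiv 2\bmod 4$, which places it in the span of the remaining graded pieces, namely $\mu(\ker\widetilde\rho)$. \textbf{This parity bookkeeping --- that the ``wrong'' columns die integrally but survive mod $2$ --- is the main obstacle}, and it is precisely the phenomenon that the twisting of the coefficients was set up to produce.

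Finally, multiplicativity and naturality are formal consequences of the characterization, because $\widetilde\rho$ and $\mu$ are ring homomorphisms and, by the theorem of Hausmann, Holm and Puppe, $\sigma$ is a ring homomorphism which is natural for equivariant maps \cite{MR2171799}. Indeed $\widetilde\sigma(x)\,\widetilde\sigma(x')$ satisfies $\widetilde\rho\bigl(\widetilde\sigma(x)\widetilde\sigma(x')\bigr)=xx'$ and $\mu\bigl(\widetilde\sigma(x)\widetilde\sigma(x')\bigr)=\sigma(\mu x)\,\sigma(\mu x')=\sigma(\mu(xx'))$, while $1$ satisfies the defining equations for $x=1$; by uniqueness $\widetilde\sigma$ is a ring homomorphism. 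Likewise, for an equivariant map $f\colon X\to Y$ both $f^\ast\circ\widetilde\sigma_Y$ and $\widetilde\sigma_X\circ f^\ast$ satisfy the two equations characterizing the section on $X$, so they coincide.
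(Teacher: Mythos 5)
Your overall strategy coincides with the paper's: characterizing $\widetilde\sigma(x)$ by the two equations $\widetilde\rho\circ\widetilde\sigma=\mathrm{id}$ and $\mu\circ\widetilde\sigma=\sigma\circ\mu$ is precisely the statement that the square formed by $\widetilde\rho$, $\rho$ and the two mod~$2$ reductions is a pull-back, which is how the paper produces $\widetilde\sigma$. Your uniqueness argument is correct ($\ker\widetilde\rho$ is $2$-torsion by Lemma~\ref{lem:SS}, while $\ker\mu=2\cdot\HH^{2n}_{C_2}(X;\Z(n))$ lies in the free summand, so $\mu$ is injective on $\ker\widetilde\rho$), and deriving additivity, multiplicativity and naturality from uniqueness is fine. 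The problem is the existence step, at the sentences beginning ``Consequently'' and ``Therefore''. From the vanishing of the integral $E_\infty$-page in the columns $p\equiv 2\bmod 4$ you may conclude that the \emph{leading} filtration term of $\mu(y_0)$ sits in a column $p\equiv 0\bmod 4$ (equivalently, that $\mu$ carries $F^p$ into $F^{p+2}$ when $p\equiv 2\bmod 4$); you may \emph{not} conclude that every component of $\mu(y_0)$, with respect to the splitting $\HH^{2n}_{C_2}(X;\F_2)\cong\bigoplus_p\sigma\bigl(\HH^{2n-p}(X;\F_2)\bigr)u^p$, vanishes in those columns. A filtration-preserving map that is injective on associated graded pieces controls only leading terms: a priori $\mu(y_0)=\sigma(\bar x)+\sigma(c_2)u^2+\cdots$ with $c_2\neq 0$, and your argument gives no way to exclude this. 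The same issue undermines the final identification of ``the span of the remaining graded pieces'' with $\mu(\ker\widetilde\rho)$: the image under $\mu$ of a torsion class with leading term in column $4$ is $\sigma(c)u^4$ plus higher-filtration terms which could perfectly well involve $u^6$. So the key containment $d\in\mu(\ker\widetilde\rho)$ is not established.

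You have correctly isolated the crux --- the mod~$2$ spectral sequence has nonzero entries in columns where the twisted integral one vanishes, so surjectivity of the reduction onto the relevant classes is exactly what must be proved --- but the filtration bookkeeping you offer does not prove it. The paper closes this step differently: using the module identification of Lemma~\ref{lem:SS} it asserts that both reduction maps sit in short exact sequences with the \emph{same} kernel $K^\ast=2\HH^\ast(X;\Z)$, so that the right-hand square is a pull-back and the compatible pair $(x,\sigma(\bar x))$ has a (unique) preimage; no column-by-column analysis of the image of $\mu$ is attempted. To repair your version you would need an additional input that sees more than the two degenerate spectral sequences --- for instance a Bockstein computation showing that $\sigma(\bar x)$ is killed by the integral Bockstein $\beta\colon\HH^{2n}_{C_2}(X;\F_2)\to\HH^{2n+1}_{C_2}(X;\Z(n))$, or an argument exploiting the conjugation equation and the restriction to the fixed points, as in the $S^2$ case where one checks directly that the integral generator of $\HH^{2}_{C_2}(S^2;\Z(1))\cong\Z$ reduces to $\sigma(\bar h)$ rather than to $\sigma(\bar h)+u^2$.
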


\begin{proof}
As $X$ has only cells in even dimension, $\HH^\ast(X;\Z)$ is free in each degree. We have an isomorphism of modules
by Lemma~\ref{lem:SS}
\[
\mathcal{H}^\ast_{C_2}(X,{}^t\Z) \cong \HH^*(BC_2;\Z) \otimes \HH^*(X;\Z) \cong \Z[a]/(2a) \otimes \HH^*(X;\Z)\, .
\]
We also know that $\HH^\ast_{C_2}(X; \F_2)\cong \HH^*(BC_2; \F_2)  \otimes \HH^\ast(X; \F_2) \cong \F_2[u] \otimes \HH^\ast(X; \F_2)$
since $X$ is a conjugation space.

Let $K^\ast$ be the kernel of the (surjective) map $\HH^\ast(X;\Z) \rightarrow \HH^\ast(X;\F_2)$.
Since the mod $2$ reduction induces isomorphisms $\HH^\ast(BC_2; \Z) \to \F_2[u]$ in any positive even degree,
we have a commutative diagram of groups:
\[
\xymatrix{
0 \ar[r] & K^\ast  \ar[r] \ar@{=}[d]& \mathcal{H}^\ast_{C_2}(X,{}^t\Z) \ar^{\text{red}}[r] \ar@{->>}^{\widetilde{\rho}}[d] 
& \HH^{2\ast}_{C_2}(X;\mathbf{F}_2) \ar[r]  \ar@{->>}^{\rho}[d] & 0  \\
0 \ar[r] & K^\ast \ar[r] & \HH^{\ast}(X;\mathbf{Z}) \ar[r] & \HH^{\ast}(X;\mathbf{F}_2) \ar[r] & 0,
}
\]
which implies that the right-hand square is a pull-back square. The section $\sigma$ splitting the
restriction $\rho$ determines hence a unique section $\widetilde{\sigma}: \HH^{\ast}(X;\mathbf{Z})
\to \mathcal{H}^\ast_{C_2}(X,{}^t\Z)$. It is a ring homomorphism because both $\sigma$ and the mod $2$
reduction are so. Finally, naturality follows from that of the section $\sigma$ in an $\HH^\ast$-frame and the fact
that the pullback diagram is natural in $X$.
\end{proof}

A little more can be extracted from the proof: the collapse of the two spectral sequences allows one to prove 
a version of the classical Leray-Hirsch theorem for the algebra $\mathcal{H}^\ast_{C_2}(X,{}^t\Z)$. The fibration
$X_{hC_2} \rightarrow BC_2$ splits since the fixed point set $X^\tau$ is not empty. As it is in fact connected,
any two sections are homotopic, which determines a canonical copy of $\HH^\ast(BC_2;\Z)$ in
$\mathcal{H}^\ast_{C_2}(X,{}^t\Z)$.

\begin{corollary}[Leray-Hirsh for the equivariant cohomology]
\label{cor lerayhirsh}
Let $X$ be a spherical conjugation space, then  the canonical map
\[
\begin{array}{rcl}
\Z[a]/(2a) \otimes\HH^\ast(X;\Z) & \longrightarrow & \mathcal{H}^\ast_{C_2}(X,{}^t\Z) \\
r \otimes x  & \longmapsto & r \cup  \widetilde{\sigma}(x)
\end{array}
\]
is a $\Z[a]/(2a)$-algebra isomorphism, natural in $X$. \hfill{\qed}
\end{corollary}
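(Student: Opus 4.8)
The plan is to assemble the isomorphism from the pieces already established, treating it as a Leray--Hirsch argument packaged through the section $\widetilde{\sigma}$. The map is well defined because $\widetilde{\sigma}$ is a ring homomorphism by Theorem~\ref{thm intliftsigma} and the class $a \in \HH^2(BC_2;\Z)$ pulls back to $\mathcal{H}^\ast_{C_2}(X,{}^t\Z)$ along the projection $X_{hC_2} \to BC_2$; since $2a = 0$ already holds in $\HH^\ast(BC_2;\Z)$, the relation is respected and the source $\Z[a]/(2a) \otimes \HH^\ast(X;\Z)$ maps coherently. That this is a $\Z[a]/(2a)$-algebra homomorphism is then formal: multiplicativity comes from $\widetilde{\sigma}$ being a ring map, and $\Z[a]/(2a)$-linearity is built into the formula $r \otimes x \mapsto r \cup \widetilde{\sigma}(x)$.

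First I would reduce the bijectivity claim to a statement about the associated graded pieces via the filtration coming from the spectral sequence. By Lemma~\ref{lem:SS} both spectral sequences collapse at $E_2$, so $\mathcal{H}^\ast_{C_2}(X,{}^t\Z)$ is, as a module, isomorphic to $\HH^\ast(BC_2;\Z) \otimes \HH^\ast(X;\Z) \cong \Z[a]/(2a) \otimes \HH^\ast(X;\Z)$ --- this is exactly the module isomorphism invoked in the proof of Theorem~\ref{thm intliftsigma}. The content of the corollary is that the explicit map $r \otimes x \mapsto r \cup \widetilde{\sigma}(x)$ realizes this abstract module isomorphism. I would check this by observing that $\widetilde{\sigma}(x)$ lives in the correct filtration degree (it restricts to $x$ along $\widetilde{\rho}$, hence hits the bottom-row generator on the $E_\infty$-page) and that multiplication by $a$ shifts filtration in the way dictated by the $BC_2$-factor. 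On associated graded objects the map therefore becomes the identity tensored isomorphism $\Z[a]/(2a) \otimes \HH^\ast(X;\Z) \to E_\infty$, which is bijective.

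The main obstacle I anticipate is bookkeeping the compatibility between the two coefficient systems $\Z(0)$ and $\Z(1)$, since $\mathcal{H}^\ast_{C_2}(X,{}^t\Z)$ is an amalgam that selects $\HH^{4n}_{C_2}(X;\Z(0))$ and $\HH^{4n+2}_{C_2}(X;\Z(1))$. One must verify that multiplication by $a$ indeed carries elements between the correct twisted summands, using $\Z(i) \otimes \Z(j) \cong \Z(i+j)$ so that $a \in \HH^2(BC_2;\Z(0))$ and the periodicity of order $4$ conspire to keep everything inside the subalgebra. Once this indexing is checked, the collapse of the spectral sequences forces the extension problems to be trivial --- the same argument used at the end of Lemma~\ref{lem:SS}, that no $\Z/2^k$ with $k>1$ can appear --- so that the filtration splits compatibly with the $\Z[a]/(2a)$-action.

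Finally, naturality in $X$ follows from the naturality of all ingredients: the two Leray--Serre spectral sequences are natural for equivariant maps, the section $\widetilde{\sigma}$ is natural by Theorem~\ref{thm intliftsigma}, and the class $a$ is pulled back from $BC_2$, hence preserved by any equivariant map over $BC_2$. Thus the canonical map commutes with the induced homomorphisms on both sides, completing the proof.
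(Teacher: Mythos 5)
Your argument is correct and is essentially the paper's own (the paper leaves the proof as an immediate consequence of the collapse in Lemma~\ref{lem:SS}, the canonical copy of $\HH^\ast(BC_2;\Z)$ coming from the splitting of $X_{hC_2}\to BC_2$, and the section $\widetilde{\sigma}$ of Theorem~\ref{thm intliftsigma}): you run the standard Leray--Hirsch comparison on the $E_\infty$-page, exactly as intended. One small slip: since $\widetilde{\sigma}(x)$ is detected by restriction to the \emph{fiber}, it represents the generator in the $p=0$ column $E_\infty^{0,\ast}$ (not the ``bottom row'' $q=0$, which is the image of the base); this does not affect the argument.
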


\section{Equivariant cohomology computations for $BU$}
A Real bundle (in the sense of Atiyah) \cite{MR0206940} over a space $X$ equipped with an 
involution $\tau$, is a complex bundle $\eta$ over $X$ together with an involution on the total 
space, compatible with $\tau$ and which is anti-linear on the fibers. The tautological bundle over $BU(n)$
is the universal Real bundle of rank $n$, and the  tautological bundle over $BU$ is a universal stable 
bundle, where the conjugation on the base space is induced by complex conjugation on the coefficients 
of complex matrices (see for instance \cite{MR639802} for an explicit proof of this fact). 
Our definition of equivariant Chern classes will  rest on the definition of the universal equivariant Chern classes as 
elements in $\mathcal{H}^\ast_{C_2}(BU,{}^t\Z)$. We will thus need a good
understanding of how this algebra behaves with respect to the Whitney sum and
restriction to fixed points.
Theorem~\ref{thm intliftsigma} applies in particular to the spherical conjugation complex $BU$, equipped
with the $C_2$-action coming from complex conjugation. Recall that the ordinary cohomology
$\HH^\ast(BU;\Z)$ is a polynomial algebra in the ordinary universal Chern classes $c_n$. 

\begin{definition}
\label{def:equivChern}
{\rm The classes $\widetilde c_n = \widetilde \sigma(c_n) \in \HH^{2n}_{C_2}(BU; \Z(n))$ are
the \emph{universal equivariant Chern classes}.
}
\end{definition}

{}From Corollary \ref{cor lerayhirsh} we get the expected structure for the equivariant cohomology algebra
$\mathcal{H}^\ast_{C_2}(BU,{}^t\Z)$: It is isomorphic to a polynomial algebra over $\Z[a]/(2a)$ on  the equivariant Chern 
classes $\widetilde c_n$ for $n \geq 1$. This result is the analogue of Kahn's \cite[Th\'eor\`eme~3]{MR877194}.

\begin{proposition}
\label{prop:equivBU}
The equivariant cohomology algebra $\mathcal{H}^\ast_{C_2}(BU,{}^t\Z)$ is isomorphic to $\Z[a, \widetilde c_1, \widetilde c_2, \dots ]/(2a)$. 
\hfill{\qed}
\end{proposition}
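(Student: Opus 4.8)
The plan is to deduce Proposition~\ref{prop:equivBU} essentially as a formal consequence of the Leray--Hirsch isomorphism of Corollary~\ref{cor lerayhirsh}, specialized to the spherical conjugation complex $X = BU$. First I would invoke that corollary, which gives a natural isomorphism of $\Z[a]/(2a)$-algebras
\[
\Z[a]/(2a) \otimes \HH^\ast(BU;\Z) \stackrel{\cong}{\longrightarrow} \mathcal{H}^\ast_{C_2}(BU,{}^t\Z), \qquad r \otimes x \longmapsto r \cup \widetilde{\sigma}(x).
\]
The point is that the right-hand side is a $\Z[a]/(2a)$-algebra via the canonical copy of $\HH^\ast(BC_2;\Z) \cong \Z[a]/(2a)$, and the map is not merely additive but multiplicative, since $\widetilde{\sigma}$ is a ring homomorphism by Theorem~\ref{thm intliftsigma}.

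Next I would bring in the classical computation that $\HH^\ast(BU;\Z)$ is a polynomial algebra $\Z[c_1, c_2, \dots]$ on the ordinary universal Chern classes, recalled just before Definition~\ref{def:equivChern}. Combining this with the displayed isomorphism identifies $\mathcal{H}^\ast_{C_2}(BU,{}^t\Z)$ with $\Z[a]/(2a) \otimes \Z[c_1, c_2, \dots]$ as a $\Z[a]/(2a)$-algebra. The key observation is then purely algebraic: for any commutative ring $R$, one has $R \otimes_{\Z} \Z[c_1, c_2, \dots] \cong R[c_1, c_2, \dots]$ as $R$-algebras, so taking $R = \Z[a]/(2a)$ yields $\Z[a]/(2a)[c_1, c_2, \dots] \cong \Z[a, c_1, c_2, \dots]/(2a)$. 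Finally, because $\widetilde{\sigma}$ is a ring homomorphism and $\widetilde{c}_n = \widetilde{\sigma}(c_n)$ by Definition~\ref{def:equivChern}, the polynomial generators $c_n$ are carried to the equivariant Chern classes $\widetilde{c}_n$ under the isomorphism, giving the stated presentation $\Z[a, \widetilde{c}_1, \widetilde{c}_2, \dots]/(2a)$.

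I do not anticipate a serious obstacle here, since the statement is really a packaging of the already-established Leray--Hirsch theorem together with the multiplicativity of $\widetilde{\sigma}$; the only point demanding a little care is to make sure the identification respects the full ring structure and not just the additive one, which is precisely why it matters that $\widetilde{\sigma}$ is a ring homomorphism rather than merely an additive section. A secondary subtlety worth verifying is that the $\Z[a]/(2a)$-algebra structure used in Corollary~\ref{cor lerayhirsh}, coming from the canonical section of $X_{hC_2} \to BC_2$ discussed before that corollary, agrees with pulling back $a \in \HH^2(BC_2;\Z)$; granting this, the whole argument reduces to a one-line substitution of the polynomial generators, which is why the proposition is stated without proof in the excerpt.
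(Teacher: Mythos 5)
Your proposal is correct and is exactly the argument the paper intends: the proposition is stated with no proof because it is the specialization of Corollary~\ref{cor lerayhirsh} to $X=BU$, combined with $\HH^\ast(BU;\Z)\cong\Z[c_1,c_2,\dots]$ and the fact that the ring homomorphism $\widetilde{\sigma}$ sends $c_n$ to $\widetilde{c}_n$. Your extra care about the multiplicativity of the identification and the $\Z[a]/(2a)$-algebra structure is well placed but does not constitute a different route.
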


Our next result concerns the relationship between the equivariant Chern classes living in
the equivariant cohomology of $BU$ and the Stiefel-Whitney classes in the mod $2$ cohomology
of $BO$. The subspace of fixed points in $BU$ under complex conjugation is $BU^\tau = BO$.
Reducing the coefficients in the last cohomology algebra modulo $2$
yields a map of equivariant homology groups $ \mathcal{H}^\ast_{C_2}(BU,{}^t\Z) \rightarrow \HH^\ast_{C_2}(BU; \F_2)$, 
and restricting then to the fixed points we get a homomorphism:
\[
\bar r: \mathcal{H}^\ast_{C_2}(BU,{}^t\Z) \rightarrow \HH^\ast(BO;\mathbf{F}_2)[u].
\]
The conjugation equation allows us to compute the image of the equivariant Chern classes.

\begin{proposition}
\label{propChernSW}
The image of $\widetilde{c}_n$ in $\HH^\ast(BO;\mathbf{F}_2)[u]$ is $\displaystyle \sum_{i=0}^n Sq^i(w_n)u^{n-i}$.
\end{proposition}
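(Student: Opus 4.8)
The plan is to compute the image of $\widetilde c_n$ under $\bar r$ by tracing through the two-step composite: first reduce mod $2$ to land in $\HH^*_{C_2}(BU;\F_2)$, then restrict to the fixed points $BO$. The key observation is that the mod $2$ reduction of $\widetilde\sigma$ is the section $\sigma$ from the $\HH^*$-frame (this is built into Theorem~\ref{thm intliftsigma}), so the mod $2$ reduction of $\widetilde c_n = \widetilde\sigma(c_n)$ is exactly $\sigma(\bar c_n)$, where $\bar c_n \in \HH^{2n}(BU;\F_2)$ is the mod $2$ Chern class. Thus $\bar r(\widetilde c_n) = r\circ\sigma(\bar c_n)$, and I can feed this directly into the conjugation equation.

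First I would apply the conjugation equation to $x = \bar c_n \in \HH^{2n}(BU;\F_2)$, which gives $r\circ\sigma(\bar c_n) = \kappa(\bar c_n)u^n + lt_n$, where $\kappa(\bar c_n) \in \HH^n(BO;\F_2)$ is the leading term and $lt_n$ collects lower powers of $u$. The content of the proposition is therefore to identify $\kappa(\bar c_n)$ together with all the lower terms. The crucial input is the known behavior of the $\HH^*$-frame on $BU$: Hausmann--Holm--Puppe identify $\kappa$ with the map sending the mod $2$ Chern class $\bar c_n$ to the Stiefel--Whitney class $w_n$, so the leading coefficient is $w_n$, matching the $i=0$ term $Sq^0(w_n)u^n = w_n u^n$.

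To pin down the lower terms I would invoke that $\sigma$ and $\kappa$ are \emph{ring} homomorphisms (the cited theorem of Hausmann--Holm--Puppe) and that $r$ is a ring homomorphism. The natural strategy is to reduce to the rank-one case by the splitting principle: for a sum of line bundles the total class $r\circ\sigma(\bar c)$ is multiplicative, so it suffices to compute $r\circ\sigma(\bar c_1)$ for the universal line bundle, where $\sigma(\bar c_1)$ restricts to $w_1 u + w_1^2$ (here $w_1$ is the first Stiefel--Whitney class of the real line bundle of fixed points), i.e. $(u+w_1)w_1$ in the single-variable description. Expanding the product over the Chern roots and collecting the coefficient of $u^{n-i}$ then produces a symmetric function in the $w_1$'s, which by the standard Wu-type identity is precisely $Sq^i(w_n)$. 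Here I would use that on $BO$ the total Steenrod square satisfies $Sq(w_n) = \sum_i Sq^i(w_n)$ and that $Sq^i$ of the top symmetric function reproduces the relevant elementary symmetric expression.

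\textbf{The main obstacle} I expect is justifying the precise shape $\sum_{i=0}^n Sq^i(w_n)u^{n-i}$ of the lower terms rather than merely identifying the leading coefficient. The conjugation equation only guarantees that $lt_n$ has degree less than $n$ in $u$; extracting that its coefficients are exactly the Steenrod squares of $w_n$ requires an independent computation. The cleanest route is the splitting-principle argument above, reducing everything to the line-bundle case where the restriction $r\circ\sigma(\bar c_1)$ can be computed explicitly, and then recognizing the resulting symmetric-function identity as the Cartan/Wu formula for $Sq(w_n)$. Verifying that the multiplicativity of $\sigma$, $\kappa$, and $r$ is compatible with this expansion across the three maps in the composite is the delicate bookkeeping step, but it is routine once the rank-one computation is in hand.
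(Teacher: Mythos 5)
Your reduction $\bar r(\widetilde c_n)=r\circ\sigma(\bar c_n)$ and the identification $\kappa(\bar c_n)=w_n$ agree with the paper, but for the heart of the matter --- pinning down the lower-order terms of the conjugation equation --- you take a genuinely different route. The paper simply quotes the theorem of Franz and Puppe, which asserts for \emph{any} conjugation space that $r\sigma(x)=\sum_i Sq^i(\kappa(x))u^{m-i}$, and is then done in two lines; you instead re-derive this formula for $BU$ from the splitting principle, the multiplicativity of $\sigma$, $\kappa$ and $r$, and a rank-one computation. Your approach is self-contained and arguably explains \emph{why} Steenrod squares appear, at the cost of two points you must make explicit. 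First, the rank-one formula $r\sigma(\bar c_1)=w_1u+w_1^2$ does not follow from the conjugation equation alone (which only gives $w_1u+\epsilon w_1^2$ with $\epsilon\in\{0,1\}$); it needs an actual computation, e.g.\ via $(\mathbf{C}P^\infty)_{hC_2}\simeq BO(2)$, where $\sigma(\bar c_1)=w_2$ restricts to $w_1(u+w_1)$ on $\mathbf{R}P^\infty\times BC_2$. Second, the statement lives in $\HH^\ast(BO;\F_2)[u]$, where $Sq^i(w_n)\neq w_iw_n$ in general; so you cannot treat $c_n$ as the \emph{top} Chern class (a single product of Chern roots) in $BU(n)$ and conclude there, since $\HH^{\le 2n}(BO;\F_2)\to\HH^{\le 2n}(BO(n);\F_2)$ kills $w_{n+1},\dots$. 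You should instead expand $e_n(\bar y_1,\dots,\bar y_m)$ for $m\ge 2n$ as a sum over $n$-element subsets of products $\prod x_{i}(u+x_{i})$; the coefficient of $u^{n-i}$ is then the degree-$(n+i)$ part of $\sum_{|T|=n}\prod_{t\in T}(x_t+x_t^2)=Sq(e_n)$, i.e.\ the honest $Sq^i(w_n)$. With these two points supplied, your argument is correct and gives an independent proof of the Franz--Puppe formula in this case.
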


\begin{proof}
Let us write $\bar c_i$ for the mod $2$ reduction of the universal Chern classes. By definition of $\widetilde{\sigma}$ we have:
\[
\bar r(\widetilde{c}_n) = \bar r (\widetilde{\sigma}(c_n)) = r (\sigma(\bar c_n))
\]
Franz and Puppe, \cite[Theorem~1.1]{MR2198191},
computed for us the effect of the restriction to the fixed points on the image of the section $\sigma$. Here
\[
r (\sigma(\bar c_n)) = \sum Sq^i(\kappa (\bar c_n)u^{n-i}) = \sum Sq^i(w_n)u^{n-i}
\]
since, via the ``halving isomorphism" $\kappa$, the mod $2$ Chern classes correspond to the Stiefel-Whitney classes of the
fixed point bundle, $\kappa(\bar c_n)= w_n$, \cite[Proposition~6.8]{MR2171799}.
\end{proof}

The last result we will need is the behavior of the universal equivariant Chern classes under Whitney sum. 
Let $\mu : BU \times BU \rightarrow BU$ be the H-structure map which induces the Whitney sum on complex bundles. 
The space $BU \times BU$, under the diagonal action, is also a spherical conjugation space, 
\cite[Proposition~4.5]{MR2171799}, and $\mu$ is an equivariant map. The cross product is a morphism
\[
\times \colon \HH_{C_2}^{2n}(BU; \Z(n)) \otimes \HH_{C_2}^{2m}(BU; \Z(m)) \rightarrow \HH_{C_2 \times C_2}^{2(n+m)}(BU \times BU; \Z(n)\otimes \Z(m))
\]
and the diagonal inclusion $C_2 \rightarrow C_2 \times  C_2$ induces a map:
\[ 
 \delta \colon (BU \times BU)_{hC_2} \longrightarrow  (BU \times BU)_{h(C_2 \times C_2)} \simeq BU_{hC_2} \times BU_{hC_2}. 
\]

\begin{lemma}\label{lem factmu}
The map induced by the multiplication $\mu: BU \times BU \rightarrow BU$ sends the universal equivariant Chern 
class $\widetilde{c}_n$ to the cross product $\delta^*( \sum_{i=0}^n \widetilde{c}_i \times \widetilde{c}_{n-i})$.
\end{lemma}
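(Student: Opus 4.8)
The plan is to deduce the equivariant Whitney formula from the classical one by exploiting the naturality and multiplicativity of the integral section $\widetilde{\sigma}$ furnished by Theorem~\ref{thm intliftsigma}. The starting point is the ordinary Whitney sum formula $\mu^*(c_n) = \sum_{i=0}^n c_i \times c_{n-i}$ in $\HH^*(BU \times BU; \Z)$, together with the fact recalled just before the statement that $BU \times BU$ with the diagonal action is itself a spherical conjugation space; it therefore carries its own integral section, which I denote $\widetilde{\sigma}'$.

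First I would apply naturality of the section to the equivariant map $\mu$. Since $\mu$ is $C_2$-equivariant for the diagonal action on the source, naturality gives $\mu^*(\widetilde{c}_n) = \mu^*(\widetilde{\sigma}(c_n)) = \widetilde{\sigma}'(\mu^* c_n) = \widetilde{\sigma}'\bigl(\sum_{i=0}^n c_i \times c_{n-i}\bigr)$. It then remains to identify each term $\widetilde{\sigma}'(c_i \times c_{n-i})$ with $\delta^*(\widetilde{c}_i \times \widetilde{c}_{n-i})$. To pass from the external product to internal products I would use the two equivariant projections $p_1, p_2 : BU \times BU \to BU$. The key geometric observation is that on Borel constructions $pr_j \circ \delta \simeq (p_j)_{hC_2}$, that is, the diagonal map $\delta$ followed by the $j$-th projection of $BU_{hC_2} \times BU_{hC_2}$ agrees with the map induced by $p_j$ on the quotient by the diagonal $C_2$. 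Granting this, the standard relation between cross and cup products yields $\delta^*(\widetilde{c}_i \times \widetilde{c}_{n-i}) = p_1^*\widetilde{c}_i \cup p_2^*\widetilde{c}_{n-i}$ in $\mathcal{H}^*_{C_2}(BU\times BU, {}^t\Z)$, where the coefficient twist is absorbed by the canonical isomorphism $\Z(i)\otimes\Z(n-i)\cong\Z(n)$. Using naturality once more gives $p_j^*\widetilde{c}_i = \widetilde{\sigma}'(p_j^* c_i)$, and multiplicativity of $\widetilde{\sigma}'$ then collapses the product, $p_1^*\widetilde{c}_i \cup p_2^*\widetilde{c}_{n-i} = \widetilde{\sigma}'(p_1^* c_i) \cup \widetilde{\sigma}'(p_2^* c_{n-i}) = \widetilde{\sigma}'(c_i \times c_{n-i})$. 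Summing over $i$ and combining with the first step produces the claim.

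The main obstacle I anticipate is the bookkeeping of the two distinct equivariance groups, $C_2$ versus $C_2 \times C_2$, together with the coefficient twists. One must check that the equivariant cross product, the diagonal restriction $\delta^*$, and the identifications $\Z(i)\otimes\Z(n-i)\cong\Z(n)$ are mutually compatible, and in particular verify the homotopy $pr_j\circ\delta\simeq(p_j)_{hC_2}$ at the level of Borel constructions. Once this compatibility is secured, the remainder of the argument is purely formal, resting only on the naturality and ring-homomorphism properties of $\widetilde{\sigma}$ established in Theorem~\ref{thm intliftsigma}.
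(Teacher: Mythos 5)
Your argument is correct, but it takes a different route from the paper's. You work entirely at the integral level: you invoke the naturality of $\widetilde{\sigma}$ for the equivariant maps $\mu$ and $p_j$ and its multiplicativity (both supplied by Theorem~\ref{thm intliftsigma}), reduce the external product to internal cup products via $\delta^*(\alpha\times\beta)=p_1^*\alpha\cup p_2^*\beta$, and conclude that both sides equal $\widetilde{\sigma}'\bigl(\sum c_i\times c_{n-i}\bigr)$. The identification $pr_j\circ\delta\simeq (p_j)_{hC_2}$ on Borel constructions and the compatibility of the twists $\Z(i)\otimes\Z(n-i)\cong\Z(n)$, which you flag as the delicate points, are indeed standard and are exactly what the paper uses implicitly when it declares $\delta^*\circ\times$ to be the product on $\mathcal{H}^\ast_{C_2}(-;{}^t\Z)$. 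The paper instead argues by detection: it observes that the square relating $\mathcal{H}^\ast_{C_2}(BU\times BU;{}^t\Z)$ to $\HH^\ast(BU\times BU;\Z)$ and $\HH^{2\ast}_{C_2}(BU\times BU;\F_2)$ is a pull-back, so two classes agreeing under both $\widetilde{\rho}$ and mod $2$ reduction must coincide; it then checks the two images separately, using the non-equivariant Whitney formula for $\widetilde{\rho}$ and the naturality and product-compatibility of the mod $2$ section $\sigma$ of the $\HH^*$-frame for the reduction. Your version is more direct and makes clear that the lemma is a purely formal consequence of Theorem~\ref{thm intliftsigma} together with the classical formula $\mu^*(c_n)=\sum c_i\times c_{n-i}$; the paper's version leans only on properties of the mod $2$ frame already established by Hausmann--Holm--Puppe and on the injectivity built into the pull-back square, at the cost of running two parallel computations. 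Both are complete proofs.
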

\begin{proof} We have a pull-back square of $\Z[a]/(2a)$-modules, as in the proof of Theorem~\ref{thm intliftsigma}:
\[
\xymatrix{
{ \mathcal{H}^\ast_{C_2}(BU\times BU;{}^t\Z)} \ar[r]^-{\text{mod } 2} \ar[d]_{\widetilde{\rho}} & {\HH^{2\ast}_{C_2}(BU \times BU; \F_2)}
 \ar_{\rho}[d] \\ 
 {\HH^\ast(BU \times BU; \Z)} \ar[r] & {\HH^\ast(BU \times BU; \F_2)}
} 
\]
To prove the lemma it remains to show that both $\widetilde \rho$ and the mod $2$ reduction send $\mu(\widetilde{c}_n)$ 
and $\delta^*( \sum_{i=0}^n \widetilde{c}_i \times \widetilde{c}_{n-i})$ to the same element. Let us first compute the image under
$\widetilde \rho$:
\[
({\widetilde{\rho}} \circ \mu) (\widetilde{c}_n)  =  (\mu \circ {\widetilde{\rho}})(\widetilde{c}_n) = 
\mu \left( {\widetilde{\rho}}  ({\widetilde{\sigma}}(c_n)) \right)= \mu(c_n)
\]
The non-equivariant computation of $\mu(c_n)$, \cite[Theorem~1.4]{MR652459}, identifies this with the cross
product $\sum_{i=0}^n  c_i \times c_{n-i}$. Therefore, the naturality of the cross-product implies that
\[
{\widetilde{\rho}} (\mu (\widetilde{c}_n)) = \sum_{i=0}^n  c_i \times c_{n-i} = \sum_{i=0}^n \widetilde \rho(\widetilde {c}_i) \times \widetilde \rho(\widetilde{c}_{n-i})
= \widetilde \rho \left(\delta^*(\sum_{i=0}^n \widetilde{c}_i \times \widetilde{c}_{n-i})\right)\, .
\]
To compare next the images under mod $2$ reduction, we denote by $\bar c_i$ the reduction of the Chern class $c_i$.
Then, by naturality of the section $\sigma$ in an $\HH^*$-frame, we compute
\[
\delta^*( \sum_{i=0}^n \widetilde{c}_i \times \widetilde{c}_{n-i}) \text{mod } 2 = 
\delta^*( \sum_{i=0}^n \sigma(\bar{c}_i) \times \sigma(\bar{c}_{n-i})) = \sigma(\sum_{i=0}^n \bar c_i \times \bar c_{n-i})
\]
We have seen above that $\mu(c_n)$ coincides with the cross product $\sum c_i \times c_{n-i}$, and hence so
does $\mu(\bar c_n)$ with $\sum \bar c_i \times \bar c_{n-i}$. Therefore the mod $2$ reduction of the cross product
equals to
\[
\sigma(\mu(\bar c_n)) = \mu (\sigma(\bar c_n)) = \mu(\widetilde \sigma (c_n) \text{mod } 2) = \mu(\widetilde c_n) \text{mod } 2
\]
\end{proof}

\section{Classical and equivariant Chern classes}
We are now ready to introduce equivariant Chern classes for Real bundles.
Recall the axiomatic definition of Chern classes, as stated by Hirzebruch in \cite[Chap. 1, p. 66]{MR0202713}.

\begin{enumerate}
\item[I] (Existence) For every complex bundle $\eta$ over a finite dimensional paracompact space $B$ and 
every integer $i \geq 0$ there exists a \emph{Chern class} $c_i(\eta)$ in $\HH^{2i}(B;\mathbf{Z})$. 
The class $c_0(\eta)= 1$ is the unit element.

\item[II] (Naturality) If $f : B_1 \rightarrow B_2$ is a map of spaces and $\xi$ is a complex  bundle over 
$B_2$,  then $f^{\ast}(c_i(\xi)) = c_i(f^{\ast}(\xi))$ for all $i \geq 0$ .

\item[III] (Whitney sum) If $\eta = \eta_1 \boxplus \eta_2$ then $c(\eta)=c(\eta_1)c(\eta_2)$, 
where $c(-)$ is the total Chern class $\sum_{i=0}^\infty c_i(-)$.

\item[IV] (Normalization) If $\eta$ denotes the canonical bundle over $\mathbf{C}P^1$ then 
$c(\eta) = 1 + h$ where $h \in \HH^{2}(\mathbf{C}P^1;\mathbf{Z}) \cong \mathbf{Z}$ is the natural 
generator.
\end{enumerate}

We want our equivariant Chern classes to live in the equivariant cohomology 
$\mathcal{H}^\ast_{C_2}(-;{}^t\Z)$ and this forces us to change Axiom IV.
Let us go back to the conjugation sphere  $S^2$ examined in Example~\ref{Hopf}. The equivariant Leray-Hirsh Theorem, 
Corollary~\ref{cor lerayhirsh}, asserts that
 $\mathcal{H}^\ast_{C_2}(S^2;{}^t\mathbf{Z})$ is isomorphic as a $\Z[a]/(2a)$-module to $\Z[a]/(2a)\oplus \Z[a]/(2a)\langle \widetilde h \rangle$, 
 where $\widetilde h$ is of degree $2$ and restricts to a generator $h$ of $\HH^2(S^2;\mathbf{Z})$. The new form of Axiom IV is:

\begin{enumerate}
\item[IV'] If $\eta$ denotes the canonical bundle over $\mathbf{C}P^1 = S^2$, with the canonical Real 
structure, then $c(\eta) = 1 + \widetilde h$, where $1$ and $\widetilde h$ are generators of degree $0$ and $2$ of the $\Z[a]/(2a)$-module 
$\mathcal{H}^\ast_{C_2}(S^2;{}^t\mathbf{Z})$.
\end{enumerate}

Axioms I, II, III and IV' determine uniquely such equivariant Chern classes. The proof is analogous
to that for classical Chern classes, \cite[p. 58]{MR0202713} and is left to the interested reader. Notice that this requires the use of the splitting principle, which follows from \cite[Theorem 2.1]{MR0206940}. We thus proceed with the construction of these classes and show they satisfy all four axioms.

\begin{definition}\label{def inequivChernclasses}
{\rm Let $\eta$ be a Real bundle over the space $X$ with (equivariant) classifying
map $f : X \rightarrow BU$. Consider the classes $\widetilde c_n = \widetilde{\sigma}(c_n) \in \mathcal{H}^\ast_{C_2}(BU; {}^t\Z)$
and pull them back along $f^*$. The equivariant cohomology classes $\widetilde{c}_n(\eta) = 
f^\ast(\widetilde{c}_n) \in \mathcal{H}^\ast_{C_2}(X; {}^t\Z)$ are called the \emph{equivariant Chern classes} of $\eta$.}
\end{definition}

The following result is our version of Kahn's \cite[Th\'eor\`eme~2]{MR877194}.

\begin{theorem}
\label{thm:existence}
The equivariant Chern classes satisfy Axioms I, II, III, and IV'.
\end{theorem}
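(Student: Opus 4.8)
The plan is to verify each of the four axioms for the classes $\widetilde{c}_n(\eta) = f^*(\widetilde{c}_n)$ defined by pulling back the universal equivariant Chern classes along the equivariant classifying map $f \colon X \to BU$. Axioms I and II should fall out almost immediately from the construction. For Axiom I (Existence), the classes live in $\mathcal{H}^*_{C_2}(X;{}^t\Z)$ by definition, and $\widetilde{c}_0 = \widetilde{\sigma}(c_0) = \widetilde{\sigma}(1) = 1$ because $\widetilde{\sigma}$ is a ring homomorphism by Theorem~\ref{thm intliftsigma}. For Axiom II (Naturality), given an equivariant map $g \colon X_1 \to X_2$ and a Real bundle $\xi$ over $X_2$ with classifying map $f$, the bundle $g^*\xi$ is classified by $f \circ g$; then $g^*(\widetilde{c}_n(\xi)) = g^*f^*(\widetilde{c}_n) = (f\circ g)^*(\widetilde{c}_n) = \widetilde{c}_n(g^*\xi)$ by functoriality of pullback on equivariant cohomology.

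The real content lies in Axioms III and IV'. For Axiom IV' (Normalization), I would take the equivariant classifying map $f \colon S^2 \to BU$ of the Hopf bundle and analyze $f^*(\widetilde{c}_1)$. Since $\widetilde{\rho}$ is a natural ring homomorphism and $\widetilde{\rho}(\widetilde{c}_1) = c_1$, naturality gives $\widetilde{\rho}(f^*\widetilde{c}_1) = f^*(c_1) = h$, the generator of $\HH^2(S^2;\Z)$. By the description of $\mathcal{H}^*_{C_2}(S^2;{}^t\Z)$ from Corollary~\ref{cor lerayhirsh} recalled just above Axiom~IV', the unique element of degree $2$ restricting to $h$ under $\widetilde{\rho}$ is precisely $\widetilde{h}$, so $\widetilde{c}_1(\eta) = \widetilde{h}$ and the total class is $1 + \widetilde{h}$ as required.

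For Axiom III (Whitney sum), the key is to transfer Lemma~\ref{lem factmu}, which is the universal statement at the level of $BU \times BU$, to an arbitrary pair of Real bundles over $X$. Suppose $\eta_1, \eta_2$ are Real bundles over $X$ classified by $f_1, f_2 \colon X \to BU$; then $\eta_1 \boxplus \eta_2$ is classified by $\mu \circ (f_1 \times f_2) \circ \Delta$, where $\Delta \colon X \to X \times X$ is the equivariant diagonal. The plan is to pull back the universal identity $\mu^*(\widetilde{c}_n) = \delta^*(\sum_{i=0}^n \widetilde{c}_i \times \widetilde{c}_{n-i})$ along $(f_1 \times f_2)$ and then along $\Delta$, using compatibility of $\delta$ with the diagonal so that the cross product $\widetilde{c}_i \times \widetilde{c}_{n-i}$ becomes the cup product $\widetilde{c}_i(\eta_1) \cup \widetilde{c}_{n-i}(\eta_2)$ in $\mathcal{H}^*_{C_2}(X;{}^t\Z)$. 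Summing over $n$ then yields the total-class identity $c(\eta_1 \boxplus \eta_2) = c(\eta_1)\,c(\eta_2)$.

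I expect the main obstacle to be the bookkeeping in Axiom III, specifically tracking the twisting of the coefficients through the cross product and the map $\delta$. The subtlety is that $\widetilde{c}_i \times \widetilde{c}_{n-i}$ lives in a group with coefficients $\Z(i) \otimes \Z(j) \cong \Z(i+j)$ over the larger group $C_2 \times C_2$, and one must check that after applying $\delta^*$ and restricting along the diagonal the resulting class sits in the correct summand $\HH^{2n}_{C_2}(X;\Z(n))$ of $\mathcal{H}^*_{C_2}(X;{}^t\Z)$; this is exactly where the identification $\Hom_\Z(\Z(i),\Z(j)) \cong \Z(i+j)$ and the subalgebra structure of $\mathcal{H}^*_{C_2}$ from the Remark are needed to guarantee that the product respects the twist. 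Once this compatibility is confirmed, the verification reduces to the already-established universal Lemma~\ref{lem factmu} together with naturality.
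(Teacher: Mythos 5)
Your proposal follows essentially the same route as the paper: Axioms I, II and IV$'$ are treated as routine consequences of the construction, and Axiom III is reduced to Lemma~\ref{lem factmu} together with the identification of $\Delta^*\circ\delta^*\circ(\times)$ with the cup product in twisted coefficients, which is exactly the paper's argument. The only quibble is in your verification of IV$'$: the degree-$2$ lift of $h$ along $\widetilde{\rho}$ is not literally unique (the torsion class $a$ lies in the kernel of $\widetilde{\rho}$), but naturality of $\widetilde{\sigma}$ gives $\widetilde{c}_1(\eta)=\widetilde{\sigma}(h)=\widetilde{h}$ directly, so the conclusion is unaffected.
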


\begin{proof}
All axioms, except Axiom III, are merely routine. Axiom I follows from the fact that 
$\widetilde{\sigma}$ is a ring homomorphism. In particular the total equivariant
Chern class of the trivial bundle is~$1$.
Naturality (Axiom II) is a direct consequence of the existence of a universal bundle, and the
proof of Axiom IV' is essentially contained in the observation that lead us to modify Axiom IV.
We are thus left with Axiom III about the total equivariant Chern class of a Whitney sum. As this
class is defined by applying the section $\widetilde \sigma$ to the usual Chern class, we have basically
to check that the construction of the Whitney sum behaves well with respect to the $\HH^*$-frame
of a conjugation space.

Let $\xi$ and $\eta$ be two Real bundles over a compact space $X$ with classifying maps $f$ and $g$ 
respectively. Then a classifying map for the Whitney sum $\xi \boxplus \eta$ is the composite 
$\mu \circ (f \times g) \circ \Delta$, where $\mu$ denotes as above the H-structure map on 
$BU$ that gives rise to the Whitney sum, and $\Delta$ is the diagonal map for the space $X$.
We have thus to compute the image of $\widetilde c_n$ through
\[
\HH^{2n}_{C_2}(BU;\Z(n)) \xrightarrow{\mu} \HH^{2n}_{C_2}(BU \times BU; \Z(n))   \xrightarrow{(f\times g)^*} 
\HH^{2n}_{C_2}(X \times X; \Z(n)) \xrightarrow{\Delta^*}  \HH^{2n}_{C_2}(X; \Z(n))
\]
{}From Lemma \ref{lem factmu} we understand the first morphism on $\widetilde{c}_n$: It is the cross product
$\delta^*(\sum \widetilde c_i \times \widetilde c_{n-i})$. We are thus lead to compute
$(f \times g)^*(\widetilde c_i \times \widetilde c_{n-i}) = \widetilde c_i (\xi) \times \widetilde c_{n-i} (\eta)$
to which we must apply the composite
\begin{equation*}
\begin{split}
H^{2i}_{C_2}(X; \Z(i)) \otimes H^{2(n-i)}_{C_2}(X; \Z(n-i)) \xrightarrow{\times} H^{2n}_{C_2 \times C_2}(X \times X; \Z(i) \otimes \Z(n-i)) \\
\xrightarrow{\delta^*} H^{2n}_{C_2}(X \times X; \Z(n)) \xrightarrow{\Delta^*} H^{2n}_{C_2}(X; \Z(n))
\end{split}
\end{equation*}
But this is the product in the cohomology of $X_{hC_2}$ with twisted coefficients. Hence $\widetilde c(\xi \boxplus \eta)
= \widetilde c(\xi) \widetilde c(\eta)$.
\end{proof}

\section{Real bundles, complex bundles, and real bundles}
\label{sec relation}
In this short section we make the relation between equivariant Chern classes, classical Chern classes, and Stiefel-Whitney
classes explicit. A Real bundle can always be considered as a complex bundle by forgetting the involution. The universal 
equivariant Chern classes have been constructed by applying a section to the universal Chern classes. The following
proposition is thus obvious and recorded for completeness. 

\begin{proposition}
\label{propChern}
Let $\eta$ be a Real bundle over $X$. Then the image of 
$\widetilde{c}_n(\eta)$ via $\mathcal{H}^{2n}_{C_2}(X;{}^t\Z) \rightarrow \HH^{2n}(X;\mathbf{Z})$ is $c_n(\eta)$. \hfill{\qed}
\end{proposition}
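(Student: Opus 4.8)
The plan is to reduce the entire statement to the defining property of the integral lift $\widetilde{\sigma}$ obtained in Theorem~\ref{thm intliftsigma}, namely that it is a section of the restriction homomorphism $\widetilde{\rho}$. Since we are tracking a single class under the restriction to ordinary cohomology, the whole argument should amount to a short naturality chase, and no serious computation is needed.

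First I would unwind the definitions. By Definition~\ref{def inequivChernclasses} the class $\widetilde{c}_n(\eta)$ equals $f^*(\widetilde{c}_n)$, where $f \colon X \to BU$ is the equivariant classifying map of $\eta$ and $\widetilde{c}_n = \widetilde{\sigma}(c_n)$ is the universal equivariant Chern class. The map appearing in the statement, $\mathcal{H}^{2n}_{C_2}(X;{}^t\Z) \to \HH^{2n}(X;\Z)$, is precisely the restriction homomorphism $\widetilde{\rho}$ of Theorem~\ref{thm intliftsigma}.

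Next I would invoke naturality. The map $\widetilde{\rho}$ is assembled from the edge homomorphisms of the two Leray--Serre spectral sequences, so it is induced by restriction to the fibre of the Borel fibration and therefore commutes with pullback along the equivariant map $f$. Applying the resulting commutative square to $\widetilde{c}_n$, and using that $\widetilde{\sigma}$ is a section of $\widetilde{\rho}$ so that $\widetilde{\rho}(\widetilde{c}_n) = \widetilde{\rho}(\widetilde{\sigma}(c_n)) = c_n$, I obtain
\[
\widetilde{\rho}\bigl(\widetilde{c}_n(\eta)\bigr) = \widetilde{\rho}\bigl(f^*(\widetilde{c}_n)\bigr) = f^*\bigl(\widetilde{\rho}(\widetilde{c}_n)\bigr) = f^*(c_n).
\]

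Finally I would identify $f^*(c_n)$ with the classical Chern class. The only point deserving a word of care---and the closest thing to an obstacle---is that the underlying non-equivariant map of spaces $X \to BU$ obtained from $f$ by forgetting the $C_2$-action classifies the complex bundle underlying $\eta$; this is exactly the content of $\eta$ having equivariant classifying map $f$. Granting this, naturality of the classical Chern classes (Axiom~II) gives $f^*(c_n) = c_n(\eta)$, which finishes the argument. In truth there is no genuine difficulty: all the substance was already packaged into the statement that $\widetilde{\sigma}$ is a natural section of $\widetilde{\rho}$, and the proposition merely reads off the value of $\widetilde{\rho}$ on a class in its image, which justifies the paper's remark that the result is obvious and recorded only for completeness.
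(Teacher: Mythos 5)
Your argument is correct and is precisely the reasoning the paper leaves implicit: the proposition follows from $\widetilde{c}_n = \widetilde{\sigma}(c_n)$ being a preimage under the section $\widetilde{\sigma}$ of $\widetilde{\rho}$, together with naturality of $\widetilde{\rho}$ along the classifying map and the fact that the underlying non-equivariant map classifies the underlying complex bundle. The paper marks the statement \qed\ as obvious for exactly these reasons, so your write-up matches its intended proof.
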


We analyze now the relation between the equivariant Chern classes of a Real bundle over $X$ and the
Stiefel-Whitney classes of the associated real bundle over the fixed points $X^\tau$.
The following proposition generalizes \cite[Theorem~4]{MR877194}, 
which deals with spaces with trivial involution, as well as \cite[Proposition~6.8]{MR2171799} which treats the case of bundles over 
spherical conjugation spaces. It gives a description of the images of the equivariant Chern classes through the homomorphism
\[
\bar r: \mathcal{H}^\ast_{C_2}(X;{}^t\Z) \xrightarrow{r} \mathcal{H}^\ast_{C_2}(X^\tau; {}^t \Z) \rightarrow \HH^\ast_{C_2}(X^\tau; \F_2) \cong \HH^\ast (X^\tau; \F_2)[u].
\]

\begin{proposition}
\label{propChernSW2}
Let $\eta$ be a Real bundle over $X$ and $\eta^\tau$ the associated real bundle of fixed points. Then the image of 
$\widetilde{c}_n(\eta)$ in $H^\ast(X^\tau;\mathbf{F}_2)[u]$ is $\displaystyle \sum_{i=0}^n Sq^i(w_n(\eta^\tau))u^{n-i}$.
\end{proposition}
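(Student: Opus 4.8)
The plan is to reduce the statement about an arbitrary Real bundle $\eta$ over $X$ to the universal case already settled in Proposition~\ref{propChernSW}, using naturality. By Definition~\ref{def inequivChernclasses} the equivariant Chern class $\widetilde c_n(\eta)$ is $f^\ast(\widetilde c_n)$, where $f\colon X\to BU$ is the equivariant classifying map of $\eta$ and $\widetilde c_n\in \mathcal{H}^{2n}_{C_2}(BU;{}^t\Z)$ is the universal class. So I would first observe that the homomorphism $\bar r$ in the statement is natural with respect to equivariant maps: the restriction $r$ to fixed points, the mod $2$ reduction, and the identification $\HH^\ast_{C_2}(X^\tau;\F_2)\cong \HH^\ast(X^\tau;\F_2)[u]$ all commute with maps induced by $f$. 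Concretely, the map $f$ restricts on fixed points to a map $f^\tau\colon X^\tau\to BU^\tau=BO$ which classifies precisely the real bundle of fixed points $\eta^\tau$, so that $(f^\tau)^\ast(w_n)=w_n(\eta^\tau)$.

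The key computation is then to apply $f^\ast$ to the universal identity. By Proposition~\ref{propChernSW} we know that the image of the universal class $\widetilde c_n$ under $\bar r\colon \mathcal{H}^\ast_{C_2}(BU;{}^t\Z)\to \HH^\ast(BO;\F_2)[u]$ equals $\sum_{i=0}^n Sq^i(w_n)u^{n-i}$. Naturality of $\bar r$ gives the commuting square relating the universal case to the case of $\eta$, so that
\[
\bar r\bigl(\widetilde c_n(\eta)\bigr)=\bar r\bigl(f^\ast \widetilde c_n\bigr)=(f^\tau)^\ast\bigl(\bar r(\widetilde c_n)\bigr)=(f^\tau)^\ast\Bigl(\sum_{i=0}^n Sq^i(w_n)u^{n-i}\Bigr).
\]
To finish I would pull the right-hand side through $(f^\tau)^\ast$ using that Steenrod squares are natural and that $(f^\tau)^\ast$ is a ring homomorphism fixing the polynomial variable $u$ (which comes from the $BC_2$ factor and is unaffected by $f$). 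Hence $(f^\tau)^\ast Sq^i(w_n)=Sq^i\bigl((f^\tau)^\ast w_n\bigr)=Sq^i(w_n(\eta^\tau))$, yielding exactly $\sum_{i=0}^n Sq^i(w_n(\eta^\tau))u^{n-i}$.

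The bulk of the argument is thus bookkeeping about naturality, and the genuinely substantive input is Proposition~\ref{propChernSW}, which in turn relies on the Franz--Puppe formula for the restriction of the section $\sigma$ and on Hausmann--Holm--Puppe's identification $\kappa(\bar c_n)=w_n$. The main obstacle I anticipate is making precise that the classifying map $f$ can be chosen equivariantly and that its restriction to fixed points genuinely classifies $\eta^\tau$ as a \emph{real} (orthogonal) bundle over $X^\tau$; this is the content of the fact that the fixed-point bundle of the universal Real bundle over $BU$ is the universal real bundle over $BO$, which is precisely what licenses the identification $(f^\tau)^\ast w_n = w_n(\eta^\tau)$. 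Once this geometric input is in place, the commutativity of all the maps involved and the naturality of the Steenrod operations make the remaining steps routine.
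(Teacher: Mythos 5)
Your proposal is correct and follows essentially the same route as the paper: reduce to the universal case of Proposition~\ref{propChernSW} by naturality, using that the classifying map $f$ restricts on fixed points to $f^\tau\colon X^\tau\to BO$ classifying $\eta^\tau$. The paper's own proof is just a terser version of your naturality argument.
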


\begin{proof}
This follows at once from the analogous computation we have done for $BU$ in Proposition~\ref{propChernSW}.
If $f:X \rightarrow BU$ is an equivariant map classifying the Real bundle $\eta$, the equivariant Chern classes
are obtained by pulling-back the universal ones through $f$ and the Stiefel-Whitney classes of $\eta^\tau$
are obtained by pulling-back the universal ones through $f^\tau: X^\tau \rightarrow BO$.
\end{proof}

We recover, in our context, Kahn's main result result \cite[Th\'eor\`eme~4]{MR877194} for Real bundles over spaces with trivial involution.

\begin{corollary}
Let $\eta$ be a Real bundle over a space $X$ with trivial involution and $\eta^\tau$ the associated real bundle of fixed points. Then the mod
$2$ reduction of $\widetilde{c}_n(\eta)$ in $H^\ast(X;\mathbf{F}_2)$ is $w_n(\eta)$.  \hfill{\qed}
\end{corollary}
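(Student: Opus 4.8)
The plan is to obtain this statement as the trivial-involution specialization of Proposition~\ref{propChernSW2}. When $C_2$ acts trivially on $X$ the fixed-point set is all of $X$, that is $X^\tau = X$, and the restriction to the fixed points is the identity. Hence the composite $\bar r$ collapses to the mod~$2$ reduction map $\mathcal{H}^\ast_{C_2}(X;{}^t\Z) \to \HH^\ast_{C_2}(X;\F_2) \cong \HH^\ast(X;\F_2)[u]$, where the last identification comes from $X_{hC_2} \simeq X \times BC_2$. So the first thing I would record is that, for a trivial action, applying $\bar r$ to a class amounts to reducing it modulo~$2$ and reading the result inside $\HH^\ast(X;\F_2)[u]$.

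Next I would feed $\widetilde c_n(\eta)$ into Proposition~\ref{propChernSW2}, which tells us directly that the image is the polynomial $\sum_{i=0}^n Sq^i(w_n(\eta^\tau))\,u^{n-i}$ in $\HH^\ast(X;\F_2)[u]$. To land in plain cohomology $\HH^\ast(X;\F_2)$, as the statement requires, I would single out the coefficient of the top power $u^n$, namely the summand $i=0$, which is $Sq^0(w_n(\eta^\tau)) = w_n(\eta^\tau)$ and lives in $\HH^n(X;\F_2)$. Since the involution on $X$ is trivial, the bundle of fixed points satisfies $\eta \cong \eta^\tau\otimes_\R\C$, so $\eta^\tau$ is the real form whose Stiefel-Whitney class we denote $w_n(\eta)$, and the corollary follows.

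There is essentially no computation here, so the only points that need care are interpretive. The first is the identification of $\bar r$ with the bare mod~$2$ reduction, which rests on the restriction to fixed points being an isomorphism for a trivial action. The second, which I expect to be the genuine subtlety, is to make precise the phrase \emph{``in $\HH^\ast(X;\F_2)$''}: the mod~$2$ reduction naturally lives in the polynomial ring $\HH^\ast(X;\F_2)[u]$, and the class $w_n(\eta)$ is recovered as its leading $u^n$-coefficient. This leading coefficient is exactly the component singled out by the halving isomorphism $\kappa$ of the $\HH^\ast$-frame, under which $\kappa(\bar c_n)=w_n$, so the corollary is best read as the assertion that, through this halving identification, the reduction of $\widetilde c_n(\eta)$ is $w_n(\eta)$, thereby recovering Kahn's \cite[Th\'eor\`eme~4]{MR877194}.
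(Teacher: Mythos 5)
Your proposal is correct and follows exactly the route the paper intends: the corollary is the specialization of Proposition~\ref{propChernSW2} to a trivial involution, where $X^\tau=X$ and $\bar r$ becomes the mod~$2$ reduction into $\HH^\ast(X;\F_2)[u]$, with $w_n(\eta^\tau)$ read off as the coefficient of $u^n$ (the $i=0$ term, since $Sq^0=\mathrm{id}$). Your interpretive remarks about landing in $\HH^\ast(X;\F_2)$ via the leading $u$-coefficient, in line with the conjugation equation and the halving isomorphism $\kappa$, are exactly the (unstated) content behind the paper's \qed.
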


\begin{remark}
\label{ex:point}
{\rm We have seen that the equivariant Chern classes of a Real bundle determine the classical Chern classes by 
forgetting the $C_2$-action and the Stiefel-Whitney classes of the fixed point bundle by reducing mod $2$. In fact,
the pull-back diagram we have used in the proof of Theorem~\ref{thm intliftsigma} shows that these two sets of classes
determine the equivariant Chern classes, as long as one works with a Real bundle over a conjugation space.
In particular, the equivariant Chern classes of a Real bundle over a point are all zero.
}\end{remark}

\appendix
\section{Stiefel-Whitney classes and Thom spaces}
A particular case where all the above applies is that of a conjugation manifold, for then the tangent bundle is a Real bundle. 
For this case our results show a nice  interplay between the equivariant Chern classes of  the conjugation manifolds, its classical
Chern classes, as well as the the Stiefel-Whitney classes of the fixed submanifold (i.e. the tangent bundle on the fixed manifold). 
There is one set of classes missing from this picture, it is the Stiefel-Whitney classes of the conjugation manifold itself!
We thus end this paper with some remarks about them.  Let $M$ be a conjugation manifold of dimension $2n$, and 
$N= M^\tau$ the submanifold of fixed points. The isomorphism $\kappa$ in the $\HH^*$-frame relates the mod $2$ 
cohomology of $M$ in even degrees with that of $N$. Recall that the \emph{Wu classes} are defined as the unique
classes $v_k \in \HH^k(M; \F_2)$ such that for all $x \in \HH^{2n-k}(M; \F_2)$, 
$v_k \cup x  = Sq^k(x)$. We show that $\kappa$ behaves well with respect to both Wu and Stiefel-Whitney classes, a
result which has been proved as well by Hambleton and Hausmann in \cite[Proposition~2.9]{HHMathZ}.

\begin{theorem}\label{thm stiefel}
Let $M$ be a conjugation manifold of dimension $2n$.
Let $v_\ast^M $and  $w_\ast^M$ (resp. $v_\ast^N$ and  $w_\ast^N$) denote the Wu and  Stiefel-Whitney class of  $M$ (resp. of $N$) . 
Then, for any $k \geq 1$,
\[
\kappa(v_{2k}^M) =  v_k^N \quad \text{ and } \quad \kappa(w_{2k}^M) = w_k^N.
\]
\end{theorem}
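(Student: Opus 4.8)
The plan is to prove the two identities separately: the Wu-class identity $\kappa(v_{2k}^M)=v_k^N$ by playing the defining property of the Wu classes against the behaviour of the halving isomorphism $\kappa$ on Steenrod squares, and the Stiefel--Whitney identity $\kappa(w_{2k}^M)=w_k^N$ by exploiting that $TM$ is a Real bundle whose fixed part is $TN$. The single new input I need for the first part is the compatibility of the halving isomorphism with Steenrod operations, namely $\kappa\circ Sq^{2i}=Sq^i\circ\kappa$ on $\HH^{2\ast}(M;\F_2)$; this is the kind of statement produced by the Franz--Puppe analysis of Steenrod squares on conjugation spaces \cite{MR2198191}, and one checks it is consistent with the restriction formula $r\sigma(x)=\sum_i Sq^i(\kappa(x))u^{m-i}$ used earlier.

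Granting the compatibility, the Wu argument is short. Since $M$ is a closed (connected, treating components separately if necessary) $2n$-manifold, $v_{2k}^M$ is the unique class with $v_{2k}^M\cup x = Sq^{2k}(x)$ for every $x\in\HH^{2n-2k}(M;\F_2)$, the equality holding in $\HH^{2n}(M;\F_2)\cong\F_2$. Applying the ring homomorphism $\kappa$ of the $\HH^\ast$-frame (Hausmann--Holm--Puppe \cite{MR2171799}) together with the compatibility gives
\[
\kappa(v_{2k}^M)\cup\kappa(x)=\kappa\bigl(Sq^{2k}(x)\bigr)=Sq^{k}\bigl(\kappa(x)\bigr)
\]
in $\HH^{n}(N;\F_2)$. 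As $x$ runs through $\HH^{2n-2k}(M;\F_2)$ its image $\kappa(x)$ runs through all of $\HH^{n-k}(N;\F_2)$, so $\kappa(v_{2k}^M)\cup y=Sq^{k}(y)$ for every $y\in\HH^{n-k}(N;\F_2)$. Since $N$ is a closed $n$-manifold, the Wu class $v_k^N$ is characterised by exactly this property, whence $\kappa(v_{2k}^M)=v_k^N$.

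For the Stiefel--Whitney classes I would argue geometrically. Because $M$ is a conjugation manifold, $TM$ is a Real bundle, hence in particular a complex vector bundle, so its underlying real bundle satisfies $w_{2k}^M=\bar c_k(TM)$ (with the odd classes vanishing), where $\bar c_k$ denotes the mod $2$ reduction of the $k$-th Chern class. The fixed-point bundle of the Real bundle $TM$ over $N=M^\tau$ is the real form $TN$; equivalently $TM|_N\cong TN\otimes_\R\C$, the differential of $\tau$ furnishing the anti-linear involution. Therefore, by naturality of $\kappa$ and the identity $\kappa(\bar c_k)=w_k$ of the fixed-point bundle (Proposition~\ref{propChernSW}, originally \cite[Proposition~6.8]{MR2171799}) applied along the equivariant classifying map of $TM$, we obtain
\[
\kappa(w_{2k}^M)=\kappa\bigl(\bar c_k(TM)\bigr)=w_k\bigl((TM)^\tau\bigr)=w_k(TN)=w_k^N.
\]

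The main obstacle is the Steenrod compatibility $\kappa\circ Sq^{2i}=Sq^i\circ\kappa$ feeding the Wu-class step: it does \emph{not} follow formally from the restriction formula alone, since the section $\sigma$ does not commute with the Steenrod squares, so I would either cite \cite{MR2198191} or prove it directly. A direct proof is exactly what the Thom-space viewpoint of this appendix is meant to support: the normal bundle of $N$ in $M$ is again $TN$, and the Wu and Stiefel--Whitney classes of $N$ are both read off from the action of $Sq$ on the Thom class of this normal bundle, which is the natural setting in which to match them, through $\kappa$, with the corresponding classes of $M$. The remaining routine points---that $N$ is a closed $n$-manifold enjoying $\F_2$-Poincaré duality, and that $\kappa$ is a degree-halving ring isomorphism in the relevant range---are part of the conjugation-manifold hypothesis and of the $\HH^\ast$-frame, and I would record them at the outset.
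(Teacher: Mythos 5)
Your argument for the Wu-class identity is essentially the paper's own proof: apply the degree-halving ring isomorphism $\kappa$ to the defining relation $v_{2k}^M\cup x=Sq^{2k}(x)$, use the compatibility $\kappa\circ Sq^{2k}=Sq^{k}\circ\kappa$ (which, as you correctly flag, is the Franz--Puppe theorem and not a formal consequence of the conjugation equation), and invoke uniqueness of Wu classes on the closed manifold $N$; the paper phrases the surjectivity step via $\kappa^{-1}(x)$ rather than letting $\kappa(x)$ range over $\HH^{n-k}(N;\F_2)$, but this is the same argument. For the Stiefel--Whitney identity you diverge from the paper: you use that $TM$ is a Real bundle with fixed part $TN$ (the paper's Theorem~\ref{thm tnagentfixe}, proved independently), the mod~$2$ relation $w_{2k}=\bar c_k$ for the underlying real bundle of a complex bundle, and the identity $\kappa(\bar c_k(\eta))=w_k(\eta^\tau)$ from \cite[Proposition~6.8]{MR2171799}. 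The paper instead deduces the second identity purely formally from the first via the Wu formula $Sq(v)=w$ and the fact that $\kappa$ commutes with total Steenrod squares. Both routes are correct; the paper's is shorter once the Wu identity is in hand and needs no geometric input beyond the manifold being a conjugation space, whereas yours trades the Wu formula for the Real structure on $TM$ and the identification $(TM)^\tau=TN$, so it is tied to the tangent bundle genuinely carrying a Real structure (which the paper does assert for conjugation manifolds) but in exchange gives a bundle-theoretic explanation of the identity rather than a Poincar\'e-duality one.
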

\begin{proof} 
The isomorphism $\kappa$ preserves cup products and Steenrod squares so that:
\[
\begin{array}{rcl}
\kappa(v_{2k}^M) \cup x   & = &  \kappa (v_{2k}^M \cup \kappa^{-1}(x) )  \\
& = & \kappa (Sq^{2k}(\kappa^{-1}(x))) \\
& = & Sq^{k}(x)\\
\end{array}
\]
for all $x \in \HH^{n -k}(N; \F_2)$. The uniqueness of the Wu classes implies that $\kappa(v_{2k}^M) = v^N_k$ for all $k \in \mathbf{N}$. If $v$ denotes
the total Wu class,  $w$ the total Stiefel-Whitney class of the tangent bundle, and $Sq$ the total Steenrod square, 
then it is known that $Sq(v) =w$. Again, as $\kappa$ commutes with Steenrod squares, we see that
\[
w^N = Sq(v^N) = Sq(\kappa(v^M) )= \kappa(Sq(v^M)) = \kappa(w^M).
\]
\end{proof}

Another way to relate the Stiefel-Whitney classes of $M$ with those of $N$ would be to analyze their construction 
via Thom spaces. We will not go through all the details, but indicate why both the tangent bundle and the Thom space of a conjugation
manifold is still equipped with a conjugation structure. Of course 
the tangent bundle of $M$ is not anymore a closed space, but as it is homotopy equivalent to $M$, one can check directly  that the 
cohomological conditions are trivially satisfied as they match those satisfied by $M$.

\begin{theorem}\label{thm tnagentfixe}
Let $M$ be a conjugation manifold and $N$ the fixed submanifold, then $TN = TM^{\tau}$.
\end{theorem}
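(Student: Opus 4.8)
The plan is to read the equality $TN = TM^\tau$ fiberwise over the fixed submanifold $N = M^\tau$ and to reduce it to the classical description of the tangent space of the fixed set of a smooth involution. The differential $d\tau \colon TM \to TM$ is itself a smooth involution of the total space covering $\tau$, so any tangent vector it fixes is necessarily based at a point of $N$; hence $TM^\tau$, the fixed set of the $C_2$-action $d\tau$ on $TM$, is a subbundle of $TM|_N$ whose fiber over $p \in N$ is the $(+1)$-eigenspace $(T_pM)^+ = \ker\bigl(d\tau_p - \mathrm{id}\bigr)$ of the $\R$-linear involution $d\tau_p$. Since $N$ is a smooth submanifold of $M$ — a standard fact for smooth $C_2$-actions — the tangent bundle $TN$ is likewise a subbundle of $TM|_N$, and it suffices to prove the fiberwise identity $T_pN = (T_pM)^+$ for every $p \in N$.

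First I would establish the easy inclusion $T_pN \subseteq (T_pM)^+$: every tangent vector of $N$ at $p$ is the velocity $\gamma'(0)$ of a smooth curve $\gamma$ lying in $N$ with $\gamma(0) = p$, and $\tau \circ \gamma = \gamma$ forces $d\tau_p(\gamma'(0)) = \gamma'(0)$. The reverse inclusion is where the content lies. I would average an arbitrary Riemannian metric over $C_2$ to obtain a $\tau$-invariant metric, so that $\tau$ becomes an isometry. The geodesic exponential map $\exp_p$ at a fixed point is then $d\tau_p$-equivariant near the origin, which identifies a neighborhood of $0$ in the eigenspace $(T_pM)^+$ diffeomorphically with a neighborhood of $p$ in the fixed set $N$. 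Consequently every vector of $(T_pM)^+$ is realized as the velocity of a curve lying in $N$, giving $(T_pM)^+ \subseteq T_pN$, hence equality of the fibers and therefore of the subbundles over $N$.

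The main obstacle is precisely this reverse inclusion: a priori the fixed subspace $(T_pM)^+$ could be strictly larger than $T_pN$, and ruling this out requires producing, for each fixed tangent direction, an honest curve inside $N$ tangent to it. The clean way to do so is the equivariant exponential map argument above, which rests only on the existence of a $\tau$-invariant metric; one could equivalently invoke the Bochner linearization theorem to put $\tau$ in linear normal form near $p$, in which coordinates $N$ is literally the fixed linear subspace and the identity $T_pN = (T_pM)^+$ is immediate. Either route also makes transparent that these fiberwise identifications are compatible as $p$ varies — the invariant metric and its exponential map vary smoothly — so that the equality holds at the level of vector bundles over $N$ and not merely fiberwise. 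Note finally that this description is insensitive to the complex structure: when $TM$ is viewed as a Real bundle the fiber $d\tau_p$ is anti-linear, but it remains an $\R$-linear involution whose $(+1)$-eigenspace is the real form, so the same argument applies verbatim.
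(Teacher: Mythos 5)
Your proof is correct and follows essentially the same route as the paper: both arguments choose a $C_2$-invariant Riemannian metric and use the equivariance and local injectivity of the exponential map to rule out fixed tangent vectors outside $TN$ (the paper phrases this via the normal bundle $\nu_N$ and the equivariant tubular neighborhood theorem, you via $\exp_p$ restricted to the $(+1)$-eigenspace, which is the same idea). No gap.
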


\begin{proof}
First choose a $C_2$-invariant Riemann metric on $M$. A fixed element in $TM$ is necessarily an element of $TM\vert_N = TN \oplus \nu_N$, 
where $\nu_N$ denotes the normal bundle of $N$, and  the direct sum is orthogonal and compatible with the action of $C_2$. 
We know from the equivariant tubular neighborhood theorem that there exists a neighborhood of the zero section of $\nu_N$, say $\mathcal{V}_N$ 
such that the exponential map
\[
exp : \mathcal{V}_N  \rightarrow M
\]
is injective, and maps onto an open tubular neighborhood of $N$ in $M$. Moreover, as the underlying metric is invariant, 
this map is equivariant with respect to the natural actions on $\mathcal{V}_N$ and  $M$. In particular, if there exists a vector in 
$TM\vert_N$ not in $TN$ that is fixed, then, from the above orthogonal decomposition we get a non-zero vector in $\nu_N$ that is fixed, 
and therefore, via the exponential map, a fixed point outside $N$, a contradiction.
\end{proof}

It follows that the natural compactification of $TM$, namely the Thom space $Th(M)$ is a conjugation \emph{space} (not a conjugation manifold). 
The next result is an extension to the tangent bundle of $M$ of a previous 
result of Hausmann, Holm, and Puppe concerning Real bundles over conjugation spaces (see \cite[p. 946]{MR2171799}).

\begin{corollary}\label{cor thom}
Let $M$ be a conjugation manifold and $N$ the fixed submanifold, then the Thom space $Th(M)$ is a conjugation space with fixed subspace $Th(N)$.
\end{corollary}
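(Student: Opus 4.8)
The plan is to realize $Th(M)$ as the Thom space of the tangent bundle $TM$, viewed as a Real bundle over the conjugation space $M$, and then to invoke the Hausmann--Holm--Puppe result on Thom spaces of Real bundles over conjugation spaces \cite[p.~946]{MR2171799}. First I would record that $TM$ is indeed a Real bundle over $M$: the conjugation $\tau$ differentiates to an involution $d\tau$ on the total space which, in the complex structure read off from the conjugation charts, is anti-linear on the fibres and covers $\tau$ on the base. By Theorem~\ref{thm tnagentfixe} the fixed-point bundle $(TM)^\tau$ is precisely $TN$, the tangent bundle of the fixed submanifold $N = M^\tau$. Thus the input data is a Real bundle $TM \to M$ over a conjugation space whose fixed bundle is $TN \to N$.

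Next I would apply the cited HHP result, which asserts that the Thom space of a Real bundle over a conjugation space is again a conjugation space, with fixed subspace the Thom space of the fixed bundle. Concretely, writing $Th(\eta) = D(\eta)/S(\eta)$, one builds an $\HH^*$-frame on $Th(\eta)$ by combining the mod~$2$ Thom isomorphisms for $\eta$ and for $\eta^\tau$ with the halving isomorphism $\kappa$ and the section $\sigma$ of the frame on the base; the rank bookkeeping matches because $\kappa$ divides degrees by two while the Thom shifts for $TM$ and $TN$ are $2n$ and $n$ respectively. For $\eta = TM$ this yields that $Th(M) = Th(TM)$ is a conjugation space.

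It remains to identify the fixed subspace. Working with a $C_2$-invariant metric as in the proof of Theorem~\ref{thm tnagentfixe}, the disk and sphere bundles are $C_2$-stable and satisfy $D(TM)^\tau = D((TM)^\tau)$ and $S(TM)^\tau = S((TM)^\tau)$. Since the basepoint of the Thom space is fixed and taking $C_2$-fixed points commutes with collapsing the subcomplex $S(TM)$, we obtain $Th(M)^\tau = D((TM)^\tau)/S((TM)^\tau) = Th((TM)^\tau)$. Substituting $(TM)^\tau = TN$ from Theorem~\ref{thm tnagentfixe} gives $Th(M)^\tau = Th(TN) = Th(N)$, as claimed.

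The main obstacle is that $TM$ is not a closed manifold, so the HHP statement cannot be quoted verbatim for a Real bundle over a \emph{closed} conjugation space, and one must check that the cohomological hypotheses defining an $\HH^*$-frame persist in this noncompact setting. This is resolved by the observation already made above: the zero section and the bundle projection exhibit an equivariant homotopy equivalence $TM \simeq M$, under which the $\HH^*$-frame of $M$ transports to $TM$, so that the conjugation equation holds on $TM$ exactly as on $M$. The Thom-space construction then inherits the frame through the cofibre sequence $S(TM) \hookrightarrow D(TM) \to Th(M)$, completing the verification.
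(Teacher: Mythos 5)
Your proposal is correct and follows essentially the same route as the paper: the paper's proof is a one-line citation of \cite[Proposition~6.4]{MR2171799}, applied to the Real bundle $TM$ over the conjugation space $M$ with fixed bundle $TN$ (via Theorem~\ref{thm tnagentfixe}), and the paper likewise disposes of the noncompactness of $TM$ by the equivariant homotopy equivalence $TM \simeq M$, exactly as you do. Your write-up merely makes explicit the details that the paper leaves to the reference.
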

\begin{proof}
The proof is exactly the same as \cite[Proposition~6.4]{MR2171799}.
\end{proof}

For a manifold with involution, the Stiefel-Whitney numbers of the manifold are determined by the Stiefel-Whitney numbers of the 
fixed point submanifolds and of their normal bundles (see for instance \cite{MR516213}). In view of the above result for conjugation 
manifolds we have a slightly stronger result, compare also with \cite[Corollary~2.14]{HHMathZ}.

\begin{corollary}\label{cor cobord}
Let $M_1$ and $M_2$ be conjugation manifolds with fixed submanifolds $N_1$ and $N_2$ respectively. 
Then $M_1$ and $M_2$ are (non-equi\-va\-ri\-antly) cobordant if and only if $N_1$ and $N_2$ are cobordant.
\end{corollary}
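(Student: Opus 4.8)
The plan is to reduce the cobordism statement to a characteristic-number argument, using Thom's theorem that unoriented cobordism is detected by Stiefel--Whitney numbers. The key geometric input is Corollary~\ref{cor thom}: the Thom space $Th(M)$ is a conjugation space with fixed subspace $Th(N)$. First I would recall that two closed manifolds are cobordant (in the unoriented bordism ring $\mathfrak{N}_\ast$) if and only if they have the same Stiefel--Whitney numbers, i.e. if and only if all the products $w_{i_1}^M\cdots w_{i_r}^M$ evaluated against the fundamental class $[M]$ agree. So the entire problem is to show that the Stiefel--Whitney numbers of a conjugation manifold $M$ are determined by, and conversely determine, those of its fixed submanifold $N$.

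The central tool is the halving isomorphism $\kappa$ from the $\HH^\ast$-frame. By Theorem~\ref{thm stiefel} we already know $\kappa(w_{2k}^M)=w_k^N$, so $\kappa$ carries the even Stiefel--Whitney classes of $M$ to the Stiefel--Whitney classes of $N$. The remaining ingredient is to see how $\kappa$ interacts with the top (fundamental) class, so that Stiefel--Whitney \emph{numbers} correspond. Since $M$ is a conjugation manifold of dimension $2n$ and $N=M^\tau$ has dimension $n$, the isomorphism $\kappa\colon \HH^{2\ast}(M;\F_2)\to\HH^\ast(N;\F_2)$ sends the top class of $M$ in degree $2n$ to the top class of $N$ in degree $n$, and being a ring homomorphism it takes a monomial $w_{2i_1}^M\cdots w_{2i_r}^M$ in the top degree to $w_{i_1}^N\cdots w_{i_r}^N$. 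Evaluating against fundamental classes, I would argue that the Stiefel--Whitney number $\langle w_{2i_1}^M\cdots w_{2i_r}^M,[M]\rangle$ equals $\langle w_{i_1}^N\cdots w_{i_r}^N,[N]\rangle$. Thus every \emph{even} Stiefel--Whitney number of $M$ is a Stiefel--Whitney number of $N$ and vice versa.

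The delicate point, and what I expect to be the main obstacle, is the fate of the Stiefel--Whitney classes $w_j^M$ with $j$ \emph{odd}: these carry no immediate information through $\kappa$, which only sees even degrees. Here I would invoke Theorem~\ref{thm stiefel} together with the total Wu-class relation $Sq(v)=w$. Because $\kappa$ commutes with Steenrod squares and with cup products, the total Stiefel--Whitney class $w^M$ is constrained by $w^N=\kappa(w^M)$; one must check that the odd-degree classes do not contribute independent Stiefel--Whitney numbers in the top dimension. Concretely, any Stiefel--Whitney number of $M$ involving an odd $w_j^M$ sits in a monomial of total degree $2n$, and I would show such monomials either vanish or are already accounted for by the even ones, using that $\HH^{\mathrm{odd}}(M;\F_2)$ in the relevant degrees is governed by $\kappa$ via the $\HH^\ast$-frame. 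This is the step requiring genuine care: one is asserting that all Stiefel--Whitney numbers of $M$ are forced by those of $N$, which is the statement that the class $[M]\in\mathfrak{N}_{2n}$ depends only on $[N]\in\mathfrak{N}_n$.

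Having established the bijective correspondence of Stiefel--Whitney numbers, the conclusion is formal. If $N_1$ and $N_2$ are cobordant, their Stiefel--Whitney numbers coincide, hence by the correspondence the Stiefel--Whitney numbers of $M_1$ and $M_2$ coincide, so $M_1$ and $M_2$ are cobordant; the converse runs the same way since the correspondence is an equivalence. I would phrase the final argument as: the map $\mathfrak{N}_n\ni[N]\mapsto[M]\in\mathfrak{N}_{2n}$ induced on bordism classes is well defined and injective on the image of conjugation manifolds, which yields both directions of the ``if and only if'' at once.
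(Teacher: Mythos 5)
Your proposal follows essentially the route the paper intends (the corollary is stated there without a written proof, as a consequence of Thom's theorem, Theorem~\ref{thm stiefel}, and the Kosniowski--Stong context): detect cobordism by Stiefel--Whitney numbers and transport them through the halving isomorphism $\kappa$, which matches even monomials $w_{2i_1}^M\cdots w_{2i_r}^M$ with $w_{i_1}^N\cdots w_{i_r}^N$ and, being an isomorphism $\HH^{2n}(M;\F_2)\cong\F_2\to\HH^{n}(N;\F_2)\cong\F_2$ in the top degree, is compatible with evaluation against the fundamental classes. The one point you flag as ``requiring genuine care'' --- the odd-degree classes $w_{2k+1}^M$ --- is in fact immediate and needs no further argument: a conjugation space has vanishing odd-degree mod $2$ cohomology (this is part of the Hausmann--Holm--Puppe definition, and is forced anyway by $\kappa$ being an isomorphism from $\HH^{2\ast}$ onto all of $\HH^\ast(N)$ for, e.g., spherical complexes), so $w_{2k+1}^M=0$ and every Stiefel--Whitney number of $M_1$ or $M_2$ involving an odd class vanishes identically. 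With that observation inserted, your argument is complete and correct.
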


\begin{remark}\label{rem BP}
{\rm Brown and Peterson determined in \cite{MR0146850} (see also the more complete versions \cite{MR0163326} and 
\cite{MR0176490}) all relations between Stiefel-Whitney classes which hold in a given degree for any $n$-dimensional
manifold. In our search for obstructions to realizability of conjugation manifolds with a given fixed point submanifold, see also \cite{PSpreprint},
we noticed that there are no obstructions to be found in terms of such relations. More precisely, if $R$ is a relation involving only
even degree Stiefel-Whitney classes of $2n$-dimensional manifolds, then the corresponding relation $r$ obtained by halfing
all degrees will also be true for any $n$-dimensional manifold.
}
\end{remark}


\bibliographystyle{plain}\label{biblography}

\begin{thebibliography}{10}

\bibitem{MR0206940}
M.~F. Atiyah.
\newblock {$K$}-theory and reality.
\newblock {\em Quart. J. Math. Oxford Ser. (2)}, 17:367--386, 1966.

\bibitem{MR652459}
E.~H. Brown, Jr.
\newblock The cohomology of {$B{\rm SO}_{n}$} and {$B{\rm O}_{n}$} with integer
  coefficients.
\newblock {\em Proc. Amer. Math. Soc.}, 85(2):283--288, 1982.

\bibitem{MR0146850}
E.~H. Brown, Jr. and F.~P. Peterson.
\newblock Relations between {S}tiefel-{W}hitney classes of manifolds.
\newblock {\em Bull. Amer. Math. Soc.}, 69:228--230, 1963.

\bibitem{MR0163326}
E.~H. Brown, Jr. and F.~P. Peterson.
\newblock Relations among characteristic classes. {I}.
\newblock {\em Topology}, 3(suppl. 1):39--52, 1964.

\bibitem{MR0176490}
E.~H. Brown, Jr. and F.~P. Peterson.
\newblock Relations among characteristics classes. {II}.
\newblock {\em Ann. of Math. (2)}, 81:356--363, 1965.

\bibitem{MR1324339}
K.~S. Brown.
\newblock {\em Cohomology of groups}, volume~87 of {\em Graduate Texts in
  Mathematics}.
\newblock Springer-Verlag, New York, 1994.
\newblock Corrected reprint of the 1982 original.

\bibitem{MR2084395}
P.~F. dos Santos and P.~Lima-Filho.
\newblock Quaternionic algebraic cycles and reality.
\newblock {\em Trans. Amer. Math. Soc.}, 356(12):4701--4736, 2004.

\bibitem{MR2198191}
M.~Franz and V.~Puppe.
\newblock Steenrod squares on conjugation spaces.
\newblock {\em C. R. Math. Acad. Sci. Paris}, 342(3):187--190, 2006.

\bibitem{HHMathZ}
I.~Hambleton and J.-C. Hausmann.
\newblock Conjugation spaces and 4-manifolds.
\newblock {\em Math. Zeitschrift}, 269:521--541, 2011.

\bibitem{MR2171799}
J.-C. Hausmann, T.~Holm, and V.~Puppe.
\newblock Conjugation spaces.
\newblock {\em Algebr. Geom. Topol.}, 5:923--964 (electronic), 2005.

\bibitem{MR0202713}
F.~Hirzebruch.
\newblock {\em Topological methods in algebraic geometry}.
\newblock Third enlarged edition. New appendix and translation from the second
  German edition by R. L. E. Schwarzenberger, with an additional section by A.
  Borel. Die Grundlehren der Mathematischen Wissenschaften, Band 131.
  Springer-Verlag New York, Inc., New York, 1966.

\bibitem{MR877194}
B.~Kahn.
\newblock Construction de classes de {C}hern \'equivariantes pour un fibr\'e
  vectoriel r\'eel.
\newblock {\em Comm. Algebra}, 15(4):695--711, 1987.

\bibitem{MR516213}
C.~Kosniowski and R.~E. Stong.
\newblock Involutions and characteristic numbers.
\newblock {\em Topology}, 17(4):309--330, 1978.

\bibitem{MR639802}
M.~Nagata, Goro Nishida, and H.~Toda.
\newblock Segal-{B}ecker theorem for {$KR$}-theory.
\newblock {\em J. Math. Soc. Japan}, 34(1):15--33, 1982.

\bibitem{PSpreprint}
W.~Pitsch and J.~Scherer.
\newblock Realization of conjugation spaces, in preparation.

\end{thebibliography}

\end{document}